\documentclass[a4paper,12pt]{article}
\usepackage{amsmath}
\usepackage{amsthm}
\usepackage{epsfig}
\usepackage{psfrag}
\usepackage[mathscr,mathcal]{eucal}
\usepackage{amssymb}
\usepackage{enumerate}
\textwidth6.25in \textheight8.5in \oddsidemargin.25in
\topmargin0in
\def \N {\mathbb N}

\def \R {\mathbb R}

\def \D {\mathbb D}

\def \Ordo {{\cal O}}
\def\ordo{o}

\def \ind{1\!\!1}

\def\phi{\varphi}

\def\be{\begin{equation}}
\def\ee{\end{equation}}

\def\bea{\begin{eqnarray}}
\def\eea{\end{eqnarray}}

\newcommand{\abs}[1]{\left|{#1}\right|}

\newcommand{\prob}[1]{\ensuremath{\mathbf{P}\left(#1\right)}}

\newcommand{\expect}[1]{\ensuremath{\mathbf{E}\left(#1\right)}}

\newcommand{\condprob}[2]{\ensuremath{\mathbf{P}\left(#1\,\big|\,#2\right)}}
\newcommand{\condexpect}[2]{\ensuremath{\mathbf{E}\left(#1\,\big|\,#2\right)}}
\newcommand{\condvar}[2]{\ensuremath{\mathbf{Var}\left(#1\,\big|\,#2\right)}}
\newcommand{\condcov}[3]{\ensuremath{\mathbf{Cov}\left(#1,#2\,\big|\,#3\right)}}

\def\cD{\mathcal{D}}
\def\cE{\mathcal{E}}
\def\cF{\mathcal{F}}
\def\cG{\mathcal{G}}

\def\cV{\mathcal{V}}
\def\cW{\mathcal{W}}



\def \rd{\cD} 
\def \mypoi{\mathbf{p}}
\def \mybin{\mathbf{b}}
\def \mygamma{\mathbf{g}}
\def \mysor{\mathbf{q}}

\def \toind {\buildrel {\text{d}}\over{\longrightarrow}}
\def \toinp {\buildrel {\text{p}}\over{\longrightarrow}}
\def \mybart { t^*}

\newcommand{\semmi}[1]{}

\def \M       {\mathcal{M}}
\def \A       {\mathcal{A}}


\newtheorem {theorem}{Theorem}
\newtheorem {lemma}{Lemma}[section]

\newtheorem {definition}{Definition}[section]

\title{Time evolution of dense multigraph limits under
 edge-conservative preferential attachment dynamics}

\author{ Bal\'azs R\'ath\thanks{ETH Z\"urich, Department of Mathematics, R\"amistrasse 101, 8092 Z\"urich. 
Email: rathb@math.ethz.ch.}}

\begin{document}

\maketitle

\footnotetext{Keywords: dense graph limits, multigraphs, preferential attachment}

\medskip

\begin{abstract}
We define the edge reconnecting model, a random multigraph evolving in time. 
At each time step we change one endpoint of a uniformly chosen edge: the 
new endpoint is chosen by linear preferential attachment.
We consider a sequence of edge reconnecting models where the sequence of 
initial multigraphs is convergent in a sense which is a 
natural generalization of the notion of convergence of dense
 graph sequences, defined by Lov\'asz and Szegedy in \cite{Lovasz_Szegedy_2006}.
 We investigate how the limit object evolves under the edge 
reconnecting dynamics if we rescale time properly:
 we give the complete characterization of the time evolution 
of the limit object from its initial state up to the
 stationary state, which is described in the companion paper \cite{RB_SZL}.
In our proofs we use the theory of exchangeable arrays, 
queuing and diffusion processes.
 The number of parallel edges and the degrees evolve on different 
timescales and because of this the model exhibits subaging.
\end{abstract}

$ $

\section{Introduction}

We introduce the \emph{edge reconnecting model}, a random
 multigraph (undirected graph with multiple and loop edges) evolving in time.
 Denote the multigraph at time $T$ by $\cG_n(T)$, where $T=0,1,2,\dots$ and $n=\abs{V(\cG_n(T))}$ is the number of vertices.
 We denote by $m=\abs{E(\cG_n(T))}$ the number of edges (the number of vertices and edges does not change over time).
Given the multigraph $\cG_n(T)$ we get $\cG_n(T+1)$ by uniformly choosing an edge in $E(\cG_n(T))$, 
choosing one of the endpoints of that edge with a coin flip and reconnecting the edge to a new endpoint
 which is chosen using the rule of linear preferential attachment: a vertex $v$ is chosen with probability
 $\frac{d(v)+\kappa}{2m+n\kappa}$, where $d(v)$ is the degree of vertex $v$ in $\cG_n(T)$ and $\kappa \in (0,+\infty)$
  is a fixed parameter of the model.

Our aim is to describe the time evolution of the edge reconnecting model $\cG_n(T)$ when $1 \ll n$ using
 the terminology of \emph{dense graph limits}. The notion of convergence of simple graph sequences was defined and
several equivalent characterizations of \emph{graphons} (limit objects of convergent simple graph sequences) were given
in \cite{Lovasz_Szegedy_2006}.
In \cite{KI_RB}  we give a natural generalization of the theory of dense graph limits to multigraphs
(see also \cite{Lovasz_Szegedy_2010} for similar results in a more general setting), which we briefly recall now.

 Denote by $\M$ the set of multigraphs.
If $G \in \M$ and $v,w \in V(G)$, denote by
 $E(v,w)$ the number of edges between $v$ and $w$ in $F$ (loop edges count twice).
For $F,G \in \M$ we define the density of copies of $F$ in $G$ by the formula
\begin{equation*}
 t_{=}(F,G)= \frac{1}{{\abs{V(G)} }^{\abs{V(F)}}}  \sum_{\varphi: V(F) \to V(G)}
\ind[ \forall \; v,w \in V(F) : \; E(v,w)=E(\varphi(v),\varphi(w)) ].
\end{equation*}
We say that a sequence of 
multigraphs $\left(G_n\right)_{n=1}^{\infty}$ is convergent
if for every $F \in \M$ the limit $g(F)=\lim_{n \to \infty}t_{=}(F,G_n)$ exists
 and 
 $g(\cdot)$ is a ``non-defective probability distribution'' on the set of multigraphs
 (see  Subsection  \ref{subsection_multi_def} for details).
In plain words: the sequence 
$\big(G_n\big)_{n=1}^{\infty}$ is convergent if the density of every fixed graph $F$ in $G_n$ 
converges as $n \to \infty$, and
``no mass escapes to infinity'' during this limiting procedure.

  The definition of the limit objects of convergent multigraph sequences is slightly more complicated than that of graphons.
A  measurable function
 $W:[0,1] \times [0,1] \times \N_0 \to [0,1]$  satisfying
\begin{equation}\label{def_eq_W_intro}
W(x,y,l) \equiv W(y,x,l), \quad
 \sum_{l=0}^{\infty} W(x,y,l) \equiv 1, \quad
 W(x,x, 2l+1) \equiv 0
\end{equation}
is called a \emph{multigraphon}.
Note that $\left( W(x,y,l) \right)_{l=0}^{\infty}$ is a probability distribution on $\N_0$ for each $x,y \in [0,1]$.
We say that $G_n \to W$ if for every  $F \in \M$ with $V(F)=\{1,\dots,k\}$ 
we have $\lim_{n \to \infty}t_{=}(F,G_n)=t_{=}(F,W)$ where
\begin{equation*}
 t_{=}(F,W):=\int_{[0,1]^k} \prod_{v\leq w \leq k}
W(x_v,x_w,E(v,w))\, \mathrm{d} x_1\, \mathrm{d} x_2\, \dots\, \mathrm{d}x_k.
\end{equation*}
 \cite[Theorem 1]{KI_RB} states that if a sequence of multigraphs $\left(G_n\right)_{n=1}^{\infty}$ 
is convergent then $G_n \to W$ for
some multigraphon $W$ and conversely, every multigraphon $W$ arises this way. 
We say that a sequence of random multigraphs  $\big(\cG_n\big)_{n=1}^{\infty}$ converges in probability  to
a  multigraphon $W$ (or briefly write $\cG_n \toinp W$) if for every simple graph $F$ we have
 $t_{=}(F,\cG_n) \toinp t_{=}(F,W)$, i.e.
\begin{equation}\label{convergence_of_random_multigraphons}
 \forall \, F \in \M \; \forall \, \varepsilon>0:  \;
\lim_{n \to \infty} \prob{ \abs{ t_{=}(F, \cG_n) - t_{=}(F, W) }>\varepsilon}=0.
\end{equation}

\medskip

 In  \cite[Lemma 2.1]{RB_SZL} we build on methods and models of  \cite[Section 3.4]{randomlygrown} to
 explicitly describe the unique stationary distribution $\cG_n(\infty)$ of the edge
 reconnecting model and in  \cite[Theorem 2]{RB_SZL} we prove that there is a multigraphon $\hat{W}_{\infty}$ such that
\begin{equation}\label{stac_multigraphon_intro}
\cG_n(\infty) \toinp \hat{W}_{\infty}, \qquad     n \to \infty
\end{equation}
under the condition that
  $m \approx \frac12 \rho n^2$, where $\rho \in (0,+\infty)$ 
 is a fixed parameter of the model called the \emph{edge density}. The form of the limiting multigraphon $\hat{W}_{\infty}$
depends on  $\rho$ and the linear preferential attachment parameter  $\kappa$.

\medskip

Now we describe the main results of this paper:
if we consider a sequence of edge reconnecting models with a convergent sequence of initial multigraphs
$\cG_n(0) \to W$ (satisfying some extra regularity conditions), then for every $t \in (0,+\infty)$ we have
\begin{equation}\label{intro_conv_theorems}
\cG_n(t \cdot n^2) \toinp \breve{W}_t \qquad \text{ and } \qquad
\cG_n(t \cdot n^3) \toinp \tilde{W}_t, \qquad n\ \to \infty
\end{equation}
 where the multigraphons $\breve{W}_t$ and $\tilde{W}_t$ are explicit, continuous functions of  $t$, the initial multigraphon $W$
 and  $\kappa$. Moreover we have
\begin{equation}\label{full_characterization}
 \lim_{t \to 0_+} \breve{W}_t=W, \qquad
\lim_{t \to \infty} \breve{W}_t =\lim_{t \to 0_+} \tilde{W}_t, \qquad
\lim_{t \to \infty} \tilde{W}_t =\hat{W}_{\infty}
\end{equation}
where $\hat{W}_{\infty}$ is the multigraphon in \eqref{stac_multigraphon_intro}.
 Thus by \eqref{full_characterization} the convergence theorems  \eqref{intro_conv_theorems}
give the full characterization of the time evolution of the multigraphons arising as the graph limits of the
edge reconnecting model.

$ $

Although our theorems are stated using the ``multigraphon" formalism,
 in their proofs we use the correspondence between the theory of graph limits and that of exchangeable arrays,
 a connection first observed in \cite{diaconis_janson}.
  The basic idea of the proof of our main theorems is to relate the time evolution of the edge 
reconnecting model to certain continuous-time stochastic processes using an appropriate rescaling of time:
\begin{itemize}
\item If we fix a vertex $v \in V(\cG_n(0))$ and denote by $d(T,v)$ the degree of $v$ in $\cG_n(T)$ then the evolution of the
$\R_+$-valued continuous-time stochastic process $\frac{1}{n} d( n^3 \cdot t,v)$ ``almost looks like" that of a
 Cox-Ingersoll-Ross process (a diffusion process that is commonly used in financial mathemathics to model the evolution of interest rates).
 This fact is rigorously proved using the theory of stochastic differential equations and is used in the proof 
of $\cG_n(t \cdot n^3) \toinp \tilde{W}_t$.

\item If we fix two vertices $v,w \in V(\cG_n(0))$ and denote by $E(T,v,w)$ the number of parallel/loop edges
 connecting $v$ and $w$ in $\cG_n(T)$ then the evolution of 
the $\N_0$-valued continuous-time stochastic process $E(n^2\cdot t, v,w)$ 
``almost looks like" that of the queue length of an M/M/$\infty$-queue.
 This fact is rigorously proved using a coupling argument and is
  used in the proof of
$\cG_n(t \cdot n^2) \toinp \breve{W}_t$.
\end{itemize}

The most interesting property of the edge reconnecting model is the separation of \emph{two different timescales} in
\eqref{intro_conv_theorems} and \eqref{full_characterization}: the degrees of the vertices only change significantly on 
the $n^3$ timescale, whereas the number of parallel (or loop) edges between two vertices evolves on the much
 faster $n^2$ timescale. 
The arrival rate of the M/M/$\infty$-queue describing the evolution of $E(n^2\cdot t, v,w)$ 
 depends on the current degrees of $v$ and $w$ 
(if their degrees are high then edges appear between them with higher probability
 due to preferential attachment), but since the degrees evolve on the much slower $n^3$ timescale,
 they may be treated as  constant background parameters on the $n^2$ timescale. 
The stochastic process
 $E( n^3 \cdot t+ n^2 \cdot s,v,w)$ looks stationary in the time variable $s\in \R$ if $t \in (0, +\infty)$ is fixed and
 $1 \ll n$, but different values of $t$ yield distinct pseudo-stationary
distributions since  $n^3\cdot (t_2-t_1)$ steps are enough for the background variables (degrees) to significantly change.
This phenomenon is called \emph{subaging} in  \cite{jeanbertoin}.

A similar dynamical random graph model where no subaging occurs
 is studied in the context of equation-free numerical methods 
 in \cite{katy} and \cite{yannis_karthik_katy_balazs}.

\medskip
The rest of this paper is organized as follows:

In Section \ref{section_notations_statements} we introduce some notation,
precisely formulate the above stated results, and give some heuristic hints on their proofs.

In Section \ref{section_vert_ex} we relate the theory of multigraph limits to exchangeable  arrays.

In Section \ref{section_technical_bounds} we prove some technical lemmas showing that degrees and multiple edges in the
edge reconnecting model are well-behaved.

In Section \ref{section_proof_of_thm_elek} we prove the rigorous version of 
$\cG_n(t \cdot n^2) \toinp \breve{W}_t$, Theorem \ref{theorem_elek_fejlodese}.

In Section \ref{section_fokok_fejl} 
we prove the rigorous version of $\cG_n(t \cdot n^3) \toinp \tilde{W}_t$, Theorem \ref{theorem_fokszamok_fejlodese}.

$ $

\noindent
{\bf Acknowledgement.}
The author thanks L\'aszl\'o Lov\'asz and Ioannis Kevrekidis for posing the research problem that 
became the subject of this paper.
The comments of the anonymous referees helped a lot in the developement of the paper to its current form.

The research of the author was
partially supported by the OTKA (Hungarian
National Research Fund) grants
K 60708 and CNK 77778, Morgan Stanley Analytics Budapest and
 the grant ERC-2009-AdG 245728-RWPERCRI.

\section{Notations, definitions, theorems}\label{section_notations_statements}

This section is organized as follows:

In Subsection \ref{subsection_edge_reconnecting_model} we precisely define the edge reconnecting model.

In Subsection \ref{subsection_multi_def} we give a probabilistic meaning to $t_=(F,W)$ by introducing $W$-random multigraphs and also define
the average degree $D(W,x)$ of $W$ at point $x$.

In Subsection \ref{subsection_aux} we recall some relevant properties of the M/M/$\infty$-queue and the Cox-Ingersoll-Ross process.

In Subsection \ref{subsection_statements_of_thms} we state Theorem \ref{theorem_elek_fejlodese} and
Theorem \ref{theorem_fokszamok_fejlodese}, and we also give some heuristic comments on ideas behind their proofs.  

In Subsection \ref{subsection_properties_W} we derive \eqref{full_characterization} and relate some properties of the  multigraphons 
from our main theorems to the \emph{configuration model}.

\medskip
Denote by $\N_0=\{0,1,2,\dots\}$ and $[n]:=\{1,\dots,n \}$.
Denote by $\M$ the set of undirected multigraphs (graphs with multiple and loop edges) and by $\M_n$ the set of multigraphs on $n$ vertices.
Let $G \in \M_n$. The adjacency matrix of a labeling of the multigraph $G$ with $[n]$
is denoted by $\left(B(i,j)\right)_{i,j=1}^n$, where $B(i,j) \in \N_0$ is the
number of edges connecting the vertices labeled by $i$ and $j$.
$B(i,j)=B(j,i)$ since the graph is undirected
 and  $B(i,i)$ is  two times the number of loop edges at vertex $i$ (thus $B(i,i)$ is an even number).
 An unlabeled multigraph is the equivalence class of labeled multigraphs where two labeled graphs are equivalent if one can be
 obtained by relabeling the other.
  Thus $\M$ is the set of these equivalence classes of labeled multigraphs, which are also called isomorphism types.
 We denote the set of adjacency matrices of multigraphs on $n$ nodes by $\A_n$, thus
 \[\A_n = \left\{  B\in \N_0^{n \times n}\, :\, B^T=B, \, \forall \, i \in [n] \,\;\;
   2\, |\,   B(i,i)  \right\}. \]

The degree of the vertex labeled by $i$ in $G$ with adjacency matrix $B \in \A_n$ is defined by
$\label{def_degree_B}
d(B,i):= \sum_{j=1}^n B(i,j)$, thus $d(B,i)$ is the number of edge-endpoints at $i$ (loop edges count twice).
 Let $m= \frac{1}{2} \sum_{i, j=1}^n B(i,j)=\frac{1}{2} \sum_{i=1}^n d(B,i)$
 denote the number of edges. Denote by $\A_n^m$ the set of adjacency matrices on $n$ vertices with $m$ edges.

We denote a random element of $\A_n$ by $\mathbf{X}_n$. We may associate a random multigraph $\cG_n$  to $\mathbf{X}_n$
by taking the isomorphism class of $\mathbf{X}_n$.

 We use the standard notation $\mathbf{X}_n\sim \mathbf{X}'_n$ if
 $\mathbf{X}_n$ and $\mathbf{X}'_n$ are 
identically distributed, i.e.
\[\forall \, A \in \A_n: \; \prob{ \mathbf{X}_n=A}= \prob{ \mathbf{X}'_n=A}.\]

If $\mathbf{X}_n$ is a random element of $\A_n$ then
\[\mathbf{X}_n^{[k]}:=\left( X_n(i,j) \right)_{i,j=1}^k\] is a random element of $\A_k$.

\subsection{The edge reconnecting model}\label{subsection_edge_reconnecting_model}

Now we describe the dynamics of the edge reconnecting model, which is a discrete time Markov chain with state space $\A_n^m$:
 neither the number of vertices, nor the number of edges is changed by the dynamics. 
 $\mathbf{X}(T)=\left(X(T,i,j)\right)_{i,j=1}^n$ is the state of our Markov chain at time $T$.

Given the adjacency matrix $\mathbf{X}(T)$ we get $\mathbf{X}(T+1)$ in the following way:
let $\kappa \in (0, +\infty)$. We choose a random vertex $\cV_{old}(T)$
with distribution
\begin{equation}\label{Vold}
 \condprob{\cV_{old}(T) =
i}{\mathbf{X}(T)}=\frac{d(\mathbf{X}(T),i)}{2m}
 \end{equation}

  Then we choose a uniform edge $\cE_{old}(T)=\{\cV_{old}(T),\cW(T) \}$
  going out of $\cV_{old}(T)$:
  \begin{equation*}
  \condprob{\cW(T)=i}{\mathbf{X}(T),\cV_{old}(T)}= \frac{X(T,\cV_{old}(T),i) }{d(\mathbf{X}(T),\cV_{old})}
   \end{equation*}
Note that $\cE_{old}(T)$ is uniformly distributed over all edges
of the graph at time $T$ and given $\cE_{old}(T)$, $\cV_{old}(T)$ is
uniformly chosen from the endvertices of $\cE_{old}(T)$. Moreover
 \begin{equation}\label{cW_Vold}
 \condprob{\cW(T) =
i}{\mathbf{X}(T)}=\frac{d(\mathbf{X}(T),i)}{2m}.
 \end{equation}

 Given
$\mathbf{X}(T)$, choose $\cV_{new}(T)$ according to the rules of linear
preferential attachment:
\begin{equation}\label{Vnew}
\condprob{\cV_{new}(T)=i}{\mathbf{X}(T),\cV_{old}(T),\cW(T)}=
\frac{d(\mathbf{X}(T),i)+\kappa}{2m+n\kappa}.
 \end{equation}
Thus $\cV_{new}(T)$ is conditionally independent from $\cV_{old}(T)$ and
$\cW(T)$ given $\mathbf{X}(T)$.

 Let $\cE_{new}(T):=\{\cV_{new}(T),\cW(T)
\}$.

One step of the Markov chain consists of replacing the edge
$\cE_{old}(T)$ with $\cE_{new}(T)$:
\begin{multline}\label{edge_evolution_ind}
X(T+1,i,j)=X(T,i,j)-\ind[\cV_{old}(T)=i, \cW(T)=j]-\ind[\cV_{old}(T)=j, \cW(T)=i]+\\
\ind[\cV_{new}(T)=i, \cW(T)=j]+\ind[\cV_{new}(T)=j, \cW(T)=i]
\end{multline}

This Markov chain is easily seen to be irreducible and aperiodic on $\A_n^m$. 
Note that  for any $k\leq n$
the $\N_0^{[k]}$-valued stochastic process $\left(d(\mathbf{X}(T),i)\right)_{i=1}^k$, $T=0,1,\dots$ is itself a Markov
 chain.

\subsection{Multigraphons and $W$-random multigraphs}\label{subsection_multi_def}

In this subsection we give a probabilistic meaning to $t_=(F,W)$ by introducing $W$-random multigraphs and also define
the average degree $D(W,x)$ of $W$ at point $x$.
Note that the notion of the $W$-random graph (see Definition \ref{def_X_W})
 is already present in \cite{Lovasz_Szegedy_2006}.

 Suppose $F \in \M_k,$  $G \in \M_n$ and denote by $A \in \A_k$ and $B \in \A_n$ the adjacency matrices of $F$ and $G$.
  If  $g: \M \to \R$ then we say that $g$ is a multigraph parameter. Let
$g(A):=g(F)$. 
Conversely, if $g: \bigcup_{k=1}^{\infty} \A_k \to \R$ is constant on isomorphism classes, then $g$ defines a multigraph parameter.

We  define the \emph{induced homomorphism density} of $F$ into $G$ by
  \begin{equation*}
    t_{=}(F,G):=t_{=}(A,B):= \frac{1}{n^k} \sum_{\varphi:[k] \rightarrow [n]}
     \ind \left[ \, \forall i,j \in [k]:\; A(i,j)= B(\varphi(i),\varphi(j))\right] .
  \end{equation*}

We say that a sequence of multigraphs  $\left(G_n\right)_{n=1}^{\infty}$ is convergent
if for every $k \in \N$ and every multigraph $F \in \M_k$ the limit $g(F)=\lim_{n \to \infty}t_{=}(F,G_n)$ 
exists, and
 we have $\sum_{A \in \A_k} g(A)=1$. 
 For every multigraphon $W$ (see \eqref{def_eq_W_intro}) and multigraph $F \in \M_k$ with adjacency matrix $A \in \A_k$ we define
\begin{equation}
\label{def_grafon_ind_hom_sur}
 t_{=}(F,W):=  t_{=}(A,W):=  \int_{[0,1]^k} \prod_{i\leq j \leq k}
W(x_i,x_j,A(i,j))\, \mathrm{d} x_1\, \mathrm{d} x_2\, \dots\, \mathrm{d}x_k
\end{equation}

We say that $G_n \to W$ if for every $F \in \M$ we have $\lim_{n \to \infty}t_{=}(F,G_n)=t_{=}(F,W)$. 
 By \cite[Theorem 1]{KI_RB} we have that a sequence of multigraphs $\left(G_n\right)_{n=1}^{\infty}$ is convergent then $G_n \to W$ for
some multigraphon $W$.
  The limiting multigraphon of a convergent sequence is not unique,
 but if we define the equivalence relation 
\begin{equation}\label{W_cong}
 W_1 \cong W_2 \quad \iff \quad \forall\, F \in \M :\, t_{=}(F,W_1)=t_{=}(F,W_2)
\end{equation}
 then obviously $G_n \to W_1 \iff G_n \to W_2 $. For other characterisations
of the equivalence relation $\cong$ for graphons, see \cite{lovasz_uniqueness}.

 If $\mathbf{X}_n$ is a random element of $\A_n$ for each $n \in \N$, $\cG_n$ is the isomorphism class of $\mathbf{X}_n$ and
$W$ is a multigraphon, then
 we say that $\mathbf{X}_n \toinp W$ if $\cG_n \toinp W$, see \eqref{convergence_of_random_multigraphons}.

For a multigraphon $W$ and $x \in [0,1]$ we define the \emph{average degree} of $W$ at $x$ and the \emph{edge density} of $W$ by
\begin{align}
\label{def_degree_multtigraphon}
D(W,x) &:= \int_0^1 \sum_{l=0}^{\infty} l\cdot W(x,y,l)\, \mathrm{d}y  \\
\label{def_edge_density_multigraphon}
\rho(W) &:= \int_0^1 \int_0^1 \sum_{l=0}^{\infty} l\cdot W(x,y,l)\, \mathrm{d}y\, \mathrm{d}x
\end{align}
If $\rho(W)<+\infty$ then $D(W,x)<+\infty$ for Lebesgue-almost all $x$.

\medskip
We say that a $[0,1]$-valued random variable $U$ is uniformly distributed on $[0,1]$ (or briefly denote
$U \sim \mathcal{U}[0,1]$) if $\prob{U \leq x}=x$ for all $x \in [0,1]$.

\begin{definition}[$W$-random multigraphons]
\label{def_X_W}
$ $

 Fix $k \in \N$.
 Let $\left(U_i\right)_{i=1}^{k}$  be i.i.d., $U_i \sim \mathcal{U}[0,1]$.
 Given a multigraphon $W$
  we define the $\A_k$-valued random variable
 $\mathbf{X}_W^{[k]} = \left(X_W(i,j)\right)_{i,j=1}^{k}$ as follows: 

Given the background variables 
 $(U_i)_{i=1}^{k}$ the random variables
 $\left(X_W(i,j)\right)_{i \leq j \leq k}$ are conditionally independent and
 $\condprob{ X_W(i,j)=l}{(U_i)_{i=1}^k }=W(U_i,U_j,l)$,  that is
 \begin{equation}\label{X_W_indep_prod_formula}
\forall \, A \in \A_k: \quad
 \condprob{ \mathbf{X}_W^{[k]}=A}{(U_i)_{i=1}^k }:=
 \prod_{i\leq j \leq k} W(U_i,U_j,A(i,j)).
 \end{equation}
 \end{definition}
In plain words: if $i \neq j$ and $U_i=x$, $U_j=y$
 then the number of multiple edges between the vertices labeled by $i$ and $j$
 in $\mathbf{X}_W$ has  distribution $\big( W(x,y,l) \big)_{l=1}^{\infty}$ and
 the number of loop edges at vertex $i$ has distribution
  $\big( W(x,x,2l) \big)_{l=1}^{\infty}$.

For every multigraphon $W$ and  $A \in \A_k$ we have
\begin{equation} \label{homind_grafon_valszamosan}
  t_{=}(A,W) \stackrel{\eqref{def_grafon_ind_hom_sur},\eqref{X_W_indep_prod_formula}}{=} \prob{ \mathbf{X}_W^{[k]}=A }.
\end{equation}

Recalling \eqref{W_cong} it follows that $W_1 \cong W_2$ if and only if  
$\forall\, k \in \N: \; \mathbf{X}_{W_1}^{[k]} \sim \mathbf{X}_{W_2}^{[k]}$, thus the distribution of the
$W$-random multigraphons determine the multigraphon up to  $\cong$ equivalence.
Recalling \eqref{def_degree_multtigraphon} and \eqref{def_edge_density_multigraphon} we have
\begin{equation}\label{degree_W_expect}
D(W,x)  =\condexpect{X_W(1,2)}{U_1=x},  \qquad \quad
\rho(W) =\expect{X_W(1,2)}.
\end{equation}

Note that the weak law of large numbers (heuristically) implies that 
\begin{equation}\label{heu_average_degree}
\frac{1}{n} d( \mathbf{X}_W^{[n]},i) \approx D(W,U_i), \qquad 1 \ll n 
\end{equation}
This relation is the reason why we gave the name \emph{average degree} to $D(W,x)$.

\subsection{Auxiliary stochastic processes}\label{subsection_aux}

In this subsection we recall the definition and some properties of two stochastic processes:
the  M/M/$\infty$-queue and the C.I.R. process.

\medskip

First recall the formulas defining the Poisson, binomial and gamma distributions:
\begin{align}
\label{def_mypoi}
\mypoi(k,\lambda)&:=e^{-\lambda}\frac{\lambda^k}{k!}\\
\label{def_mybin}
\mybin(k,n,p)&:= \binom{n}{k}p^k (1-p)^{n-k}\\
\label{def_mygamma}
\mygamma(x,\alpha,\beta)&:=x^{\alpha-1} \frac{\beta^{\alpha}
e^{-\beta x}}{\Gamma(\alpha)} \ind[x>0]
\end{align}
We say that a nonnegative integer-valued random variable $X$ has Poisson distribution with parameter
 $\lambda$ (or briefly denote $X \sim \text{POI} \left( \lambda \right)$) if $\prob{X=k}=\mypoi(k,\lambda)$ for
all $k \in \N$. We say that a $\{0,1, \dots,n\}$-valued random variable $Y$ has binomial distribution
 with parameters $n$ and $p$ (or briefly denote $Y \sim \text{BIN}(n, p)$)
if $\prob{Y=k}=\mybin(k,n,p)$ for all $k \in \{0,1, \dots,n\}$.
 We say that a nonnegative real-valued random variable $Z$ 
has gamma distribution with parameters $\alpha$ and 
$\beta$ (or briefly denote $Z \sim \text{Gamma}(\alpha, \beta)$) if $\prob{Z \leq z}=\int_0^z \mygamma(x,\alpha,\beta) \, \mathrm{d} x  $.

\medskip

 The M/M/$\infty$-queue with arrival rate $\mu$ and service rate $1$  is an
$\N_0$-valued continuous-time Markov chain $Y_t$, $t \in [0,+\infty)$ with infinitesimal jump rates
 \begin{align}
\label{MMinfty_up}
 \condprob{Y_{t+\mathrm{d}t}=k+1}{Y_{t}=k}& = \mu \,
 \mathrm{d}t + \ordo(\mathrm{d}t) \\
\label{MMinfty_down}
 \condprob{Y_{t+\mathrm{d}t}=k-1}{Y_{t}=k}& = k\, \mathrm{d}t + \ordo(\mathrm{d}t) \\
\label{MMinfty_stay}
\condprob{Y_{t+\mathrm{d}t}=k}{Y_{t}=k}&=1-(\mu+k )\mathrm{d}t+ \ordo(\mathrm{d}t)
\end{align}
Heuristically, $Y_t$ is the length of a queue at time $t$, where customers arrive according to a Poisson process
with rate $\mu$, customers are served parallelly and each customer is served with rate $1$. 
It is well-known (see \cite[Exercise 5.8]{sorbanallas}) that if $Y_0=h \in \N_0$ then
\begin{equation}\label{MMinfty}
\condprob{ Y_t=k}{Y_0=h}=\mysor(t,h,k,\mu):=\sum_{l=0}^k \mybin(l,h, e^{-t}) \cdot \mypoi(k-l, (1-e^{-t}) \mu),
\end{equation}
i.e. $Y_t$ has the same distribution as the sum of two independent random variables with
 $\text{BIN}(h, e^{-t})$ and $\text{POI} \left( (1-e^{-t})\mu \right)$ distributions. From \eqref{MMinfty}
we get that indeed $Y_t \toinp h$ as $t \to 0$ and the stationary distribution of
 the queue is $\text{POI} \left( \mu \right)$:
\begin{equation}\label{queue_t_0_infty}
 \lim_{t \to 0} \mysor(t,h,k, \mu)= \ind[k=h], \qquad \lim_{t \to \infty} \mysor(t,h,k,\mu)=\mypoi(k,  \mu).
\end{equation}

\medskip

 Fix $\kappa, \rho \in (0,+\infty)$. The  Cox$-$Ingersoll$-$Ross (C.I.R.) process is a diffusion process with stochastic differential equation
\begin{equation} \label{CIR_SDE_sketch}
 \mathrm{d}Z_t= \left(\kappa-\frac{\kappa}{\rho}
Z_t \right) \mathrm{d}t + \sqrt{2  Z_t}\, \mathrm{d}B_t,
 \end{equation}
where $B_t$ denotes the standard Brownian motion (for an introduction to SDE, see \cite{revuz_yor}).

Heuristically the SDE \eqref{CIR_SDE_sketch} tells us the mean and variance of small incerements of the continuous-time $\R_+$-valued
Markov process $\left(Z_t\right)_{t \geq 0}$ given the present value of $Z_t$:
\begin{equation}\label{CIR_infinitesimal_E_D}
  \condexpect{  Z_{t+\mathrm{d}t} - z }{Z_t=z} \approx
\left( \kappa - \frac{\kappa}{\rho} z   \right) \mathrm{d}t,
\qquad \condvar{  Z_{t+\mathrm{d}t}-z }{Z_t=z} \approx 2 z \, \mathrm{d}t
\end{equation}

It is well-known (see \cite[Chapter 4.6]{penzugyimatek}) that if we denote
 \[
 \alpha:=\frac{\kappa}{\rho} \qquad \text{ and } \qquad \tau(\alpha,t):=\frac{\alpha}{\exp(\alpha t)-1}
 \]
  and if we start the process $\left(Z_t\right)_{t \geq 0}$ from the initial value $Z_0=z$
 then  $2 (\tau(\alpha,t)+\alpha) \cdot Z_t$ follows a
noncentral chi-square distribution with $2 \kappa$ degrees of freedom and non-centrality parameter
$2z \cdot \tau(\alpha,t)$, thus  we have
 $\condprob{Z_t \leq x}{Z_0=z}=\int_0^x f(t,z,y)\, \mathrm{d}y$ where
\begin{equation}\label{atmenetsurusegfuggveny_CIR}
f(t,z,y)=
\sum_{i=0}^{\infty}
\mypoi(i,z \cdot \tau(\alpha,t) )\mygamma(y,\kappa+i, \tau(\alpha,t)+ \alpha ).
\end{equation}
Note that using \eqref{atmenetsurusegfuggveny_CIR} one can derive
that indeed $Z_t \toinp z$ as $t \to 0$ and the stationary distribution of
 $\left(Z_t\right)_{t \geq 0}$ is $\text{Gamma}(\kappa,\frac{\kappa}{\rho})$:
\begin{equation}\label{cir_0_infty}
\lim_{t \to 0} f(t,z,y)= \delta_{z,y}, \qquad \lim_{t \to \infty}f(t,z,y) = \mygamma(y,\kappa,\frac{\kappa}{\rho}).
\end{equation}

\subsection{Statements of Theorem \ref{theorem_elek_fejlodese} and  Theorem \ref{theorem_fokszamok_fejlodese} }
\label{subsection_statements_of_thms}

In this subsection we state the main results of this paper describing the time evolution of the limiting multigraphons
of a sequence of edge reconnecting models $\mathbf{X}_n(\cdot)$, $n \to \infty$.

In Theorem
\ref{theorem_elek_fejlodese} we precisely formulate  $\mathbf{X}_n(t \cdot n^2) \toinp \breve{W}_t$.

In Theorem
\ref{theorem_fokszamok_fejlodese} we precisely formulate  $\mathbf{X}_n(t \cdot n^3) \toinp \tilde{W}_t$.

Note that in \eqref{intro_conv_theorems}, \eqref{full_characterization} and above  we used the notations
 $\breve{W}_t$ and $\tilde{W}_t$ in order to give the most simple formulations of these results,
 nevertheless our real notations are going to be slightly different.

$ $

Now we describe the evolution of the edge reconnecting model by describing the evolution of the limiting multigraphons.
We consider a sequence of initial multigraphs $\left(G_n\right)_{n=1}^{\infty}$ which converge to a multigraphon
$W$. We assume $\abs{V(G_n)}=n$.
We denote the adjacency matrix of $G_n$ by $B_n \in \A_n$. We assume that the technical condition
 \begin{equation}\label{exponential_moment_initial}
\exists \, \lambda>0,\; C<+\infty\;\; \forall\, n: \quad \frac{1}{\binom{n}{2}} \sum_{i< j \leq n} e^{\lambda B_n(i,j)} \leq C,
\quad \frac{1}{n} \sum_{i=1}^n e^{\lambda B_n(i,i)} \leq C
 \end{equation}
holds.

First we state Theorem \ref{theorem_elek_fejlodese} about the evolution of the edge reconnecting model on the $T=\Ordo(n^2)$ timescale.
In the Introduction this result was referred to as
$\cG_n(t \cdot n^2) \toinp \breve{W}_t$ in order to make the notation as simple as possible.
 In fact we are going to prove 
$\cG_n(t \cdot \frac{\rho(W)}{2} \cdot n^2) \toinp W_t$, thus $W_t=\breve{W}_{\frac{\rho}{2} t}$. The notation
$\breve{W}_t$ will no longer be used.

\begin{theorem}\label{theorem_elek_fejlodese}
Let us fix $\kappa \in (0, +\infty)$.
We consider the edge reconnecting model $\mathbf{X}_n(T)$, $T=0,1,\dots$ on the state space $\A_n^{m(n)}$ and initial state
$\mathbf{X}_n(0)=B_n \in \A_n^{m(n)}$ for $n=1,2,\dots$.
 We assume $B_n \to W$ for some multigraphon $W$  and that \eqref{exponential_moment_initial} holds.

Then for all $t \in [0,+\infty)$ we have
\begin{equation}\label{thm_elek_statement_conv_graphlim}
\mathbf{X}_n \left( \left\lfloor t \cdot \frac{ \rho(W) \cdot n^2}{2}
 \right\rfloor \right ) \toinp W_t \qquad \text{ as } \qquad n \to \infty
 \end{equation}
  where (recall \eqref{MMinfty})
\begin{equation}\label{Wt_edge_evolution}
  W_t(x,y,k) = \left\{
\begin{array}{ll}
\sum_{h=0}^{\infty} W(x,y,h) \mysor( t,h,k, \frac{D(W,x)\cdot D(W,y)}{\rho(W)} )  & \mbox{ if $x \neq y$} \\
\ind[2 \, | \, k] \cdot
\sum_{h=0}^{\infty} W(x,y,h) \mysor( t,\frac{h}{2},\frac{k}{2}, \frac{D(W,x)\cdot D(W,y)}{2\rho(W)} )  & \mbox{ if $x= y$}
\end{array} \right.
\end{equation}
\end{theorem}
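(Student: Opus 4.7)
To prove Theorem \ref{theorem_elek_fejlodese} I plan to use the exchangeable arrays perspective (Section \ref{section_vert_ex}) together with identity \eqref{homind_grafon_valszamosan}: it suffices to check that for every $k \in \N$ and every $A \in \A_k$,
\[
t_=(A, \mathbf{X}_n(T_n)) \;\toinp\; t_=(A, W_t) = \prob{\mathbf{X}_{W_t}^{[k]} = A}, \qquad T_n := \lfloor t\, \rho(W)\, n^2/2 \rfloor.
\]
The left-hand side is, up to $\Ordo(k^2/n)$ corrections from non-injective maps, the probability that a uniformly chosen ordered $k$-tuple of distinct vertices $\varphi = (\varphi(1),\ldots,\varphi(k)) \in [n]^k$ induces the pattern $A$. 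So I would fix such a sample $\varphi$ at $T=0$ and study the joint evolution of the random $\A_k$-valued process $\wt X_n(T) := \bigl( X_n(T, \varphi(a), \varphi(b)) \bigr)_{a,b=1}^k$.

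The heart of the argument is the rate computation. For a pair $a \neq b$, write $i = \varphi(a)$, $j = \varphi(b)$, $E = X_n(T,i,j)$; a direct calculation from \eqref{Vold}--\eqref{Vnew} gives, conditionally on $\mathbf{X}_n(T)$,
\[
\prob{E \to E+1} = \frac{2 d(i) d(j) + \kappa (d(i) + d(j))}{2m(2m+n\kappa)} + \Ordo(\tfrac{E}{m^2}), \quad \prob{E \to E-1} = \frac{E}{m}\bigl(1 + \Ordo(\tfrac{1}{n})\bigr).
\]
Substituting $m \approx \tfrac12 \rho(W) n^2$ and the heuristic $d(i) \approx n\, D(W, U_a)$ (where $U_a = \varphi(a)/n$ is asymptotically uniform on $[0,1]$), and rescaling time by the factor $\rho(W) n^2/2$, these discrete rates converge to the infinitesimal rates \eqref{MMinfty_up}--\eqref{MMinfty_down} of an M/M/$\infty$-queue with arrival rate $\mu_{ab} := D(W,U_a) D(W,U_b)/\rho(W)$ and unit service rate. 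Combined with the initial law $X_n(0, \varphi(a), \varphi(b)) \sim W(U_a, U_b, \cdot)$ inherited from $B_n \to W$, the closed-form transition density \eqref{MMinfty} yields precisely the formula \eqref{Wt_edge_evolution}.

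To make this rigorous I would proceed in three steps. First, using the exponential-moment hypothesis \eqref{exponential_moment_initial} together with the technical estimates of Section \ref{section_technical_bounds}, show that on the horizon $T \leq T_n$ the degrees $d(\mathbf{X}_n(T), i)$ deviate from $n \cdot D(W, U_a)$ by $\ordo(n)$ uniformly in $T$ and in $i \in \varphi([k])$, so they may be frozen as background parameters. Second, for each fixed pair $(a,b)$ construct an explicit coupling between the entry-process $\wt X_n(\cdot, a, b)$ and an independent M/M/$\infty$ queue with frozen rate $\mu_{ab}$; since the probability that a single step affects the entry $(a,b)$ is $\Ordo(1/n^2)$, over $T_n = \Ordo(n^2)$ steps only $\Ordo(1)$ ``effective'' transitions occur and the discrete geometric waiting times converge to exponential ones. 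Third, argue that the $\binom{k}{2} + k$ off-diagonal-and-diagonal pairs evolve asymptotically independently given the latent $\{U_a\}$, because the probability that a single dynamical step affects two fixed pairs simultaneously is $\Ordo(1/n^3)$ and is negligible on the $n^2$ timescale.

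The main obstacle is the coupling in step two: each step of $\mathbf{X}_n$ entangles all pairs globally, since exactly one edge is removed and one created, whereas the M/M/$\infty$ queues have independent Poisson clocks at each pair. The coupling must also absorb the fact that the instantaneous arrival rate at $(i,j)$ depends on the evolving degrees and on $X_n(T,i,j)$ itself through the $-X(i,j)$ exclusion in the numerator of the rate formula; these errors have total-variation contribution $\ordo(1/n^2)$ per step, summable over $\Ordo(n^2)$ steps. The diagonal case $x = y$ requires separate bookkeeping: loop edges contribute to $X_n(i,i)$ in multiples of $2$, and creation of a loop at $i$ requires \emph{both} $\cW = i$ and $\cV_{new} = i$, which halves the effective arrival rate relative to the two symmetric cases available for $i \neq j$ and yields the factor $\tfrac12$ in $D(W,x)^2/(2\rho(W))$ appearing in the second branch of \eqref{Wt_edge_evolution}.
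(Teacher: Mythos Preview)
Your proposal is correct and follows essentially the same route as the paper: reduce via exchangeability (Lemma~\ref{lemma_homkonv_konv_in_dist}) to convergence in distribution of the $k\times k$ submatrix, then couple that submatrix process to a system of independent M/M/$\infty$-queues with arrival rates frozen at the initial rescaled degrees, using the estimates of Lemma~\ref{lemma_regularity} to control both the degree drift (your step~1) and the cross-pair interaction terms (your step~3). The paper packages steps~2 and~3 as a single joint coupling of the $\A_k$-valued process $\mathbf{X}_n^{[k]}(T)$ to an auxiliary $\A_k$-valued process $\mathbf{Y}_n^{[k]}(T/m)$, and takes the queue rates to be the empirical $d(\mathbf{X}_n(0),i)d(\mathbf{X}_n(0),j)/(2m(1+\ind[i=j]))$ rather than your limiting $D(W,U_a)D(W,U_b)/\rho(W)$, but these are organizational rather than substantive differences.
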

 We  prove Theorem \ref{theorem_elek_fejlodese} in Section \ref{section_proof_of_thm_elek}.
Before  stating further theorems, we devote a few paragraphs to the heuristics behind Theorem \ref{theorem_elek_fejlodese}.

In order to give some insight about \eqref{Wt_edge_evolution}, we now give a 
 probabilistic way to generate a random element of $\A_k$ with the same distribution as $\mathbf{X}_{W_t}^{[k]}$ 
(see Definition \ref{def_X_W}):

We first generate $\mathbf{X}_W^{[k]}$ using the background variables $\left(U_i\right)_{i=1}^k$, then
we get $\mathbf{X}_{W_t}^{[k]}$ by letting the entries $\left(\mathbf{X}_{W_t}(i,j)\right)_{i,j=1}^k$ evolve in time:
\begin{itemize}
 \item  if $i <j$, we run an M/M/$\infty$-queue $Y_t$ with initial value $Y_0=\mathbf{X}_W(i,j)$,
 arrival rate $\frac{D(W,U_i)\cdot D(W,U_j)}{\rho(W)}$ and service rate $1$  and let $\mathbf{X}_{W_t}(i,j):=Y_t$
\item  if $i=j$, we do the same thing
with the only exception being that the queue describing the evolution of the number of 
loop edges has arrival rate  $\frac{D(W,U_i)\cdot D(W,U_j)}{2 \rho(W)}$.
\end{itemize}

\medskip

Now we give a  heuristic argument explaining why do  M/M/$\infty$-queues enter the picture:

We look at the evolution of $X_n(T,i,j)$ for some $i \neq j$ (the case of loop edges is analogous).
We denote by $\rd_n(T,i):= \frac{1}{n} d(\mathbf{X}_n(T),i)$.
 From  \eqref{cW_Vold} and \eqref{Vnew}  it follows that
  \begin{multline}\label{heu_edge_up_rate}
\condprob{ X_n(T+1,i,j)=X_n(T,i,j)+1}{\mathbf{X}_n} \approx
\frac{\rd_n(T,i) \cdot n}{2m}\cdot \frac{\rd_n(T,j)\cdot n + \kappa}{2m+n\kappa}+\\
\frac{\rd_n(T,j) \cdot n}{2m}\cdot \frac{\rd_n(T,i)\cdot n +\kappa}{2m+n\kappa} \approx
 \frac{1}{m} \frac{ \rd_n(T,i) \rd_n(T,j)  }{\rho}
\end{multline}
\begin{equation}\label{heu_edge_down_rate}
\condprob{ X_n(T+1,i,j)=X_n(T,i,j)-1}{\mathbf{X}_n} \approx \frac{ 1}{m}X_n(T,i,j)
\end{equation}
In the statement of Theorem \ref{theorem_elek_fejlodese} we used the time scaling 
$T= \lfloor t \cdot \frac{ \rho \cdot n^2}{2}
 \rfloor \approx t \cdot m$, thus if we denote $\mathrm{d}t:=\frac{1}{m}$ then $T+1$ corresponds to $t+\mathrm{d}t$.
 If we define $Y_t:= X(t \cdot m,i,j)$ and compare \eqref{heu_edge_up_rate}, \eqref{heu_edge_down_rate} to
\eqref{MMinfty_up}, \eqref{MMinfty_down} then we see that
 the time evolution of $Y_t$ approximates that of an M/M/$\infty$-queue with arrival rate
$\mu=\frac{ \rd_n(T,i) \rd_n(T,j)  }{\rho}$ and service rate $1$. We will later see that on the
$T \asymp n^2$ timescale $ \rd_n(T,i)$ does not change significantly, so that we have
\[ \rd_n(T,i) \approx \rd_n(0,i) \stackrel{\eqref{heu_average_degree}}{\approx} D(W,U_i).\] 
We note here that the identity $D(W,x)\equiv D(W_t,x)$ can be formally derived from \eqref{Wt_edge_evolution}.

$ $

Now we look at the evolution of the edge reconnecting model on the $T=\Ordo(n^3)$ timescale.
In the Introduction this result was referred to as
$\cG_n(t \cdot n^3) \toinp \tilde{W}_t$ in order to make the notation as simple as possible. 
In fact we are going to prove 
$\cG_n(t \cdot {\rho(W)} \cdot n^3) \toinp \hat{W}_t$, thus $\hat{W}_t=\tilde{W}_{\rho t}$. The notation
$\tilde{W}_t$ will no longer be used.

\begin{theorem} \label{theorem_fokszamok_fejlodese}
Let us fix $\kappa \in (0,+\infty)$.
We consider the edge reconnecting model $\mathbf{X}_n(T)$, $T=0,1,\dots$ on the state space $\A_n^{m(n)}$ and initial state
$\mathbf{X}_n(0)=B_n \in \A_n^{m(n)}$ for $n=1,2,\dots$.
 We assume $B_n \to W$ for some multigraphon $W$  and that \eqref{exponential_moment_initial} holds.

Then for all $t \in (0,+\infty)$ (but not for t=0) we have
\begin{equation}\label{thm_fokok_statement_conv_graphlim}
\mathbf{X}_n \left( \lfloor t \cdot  \rho(W) \cdot n^3
 \rfloor \right) \toinp \hat{W}_t \qquad \text{ as } \qquad n \to \infty
 \end{equation}
  where
 \begin{equation}
  \label{fokszamok_fejlodese_grafon}
\hat{W}_t(x,y,k) =
 \left\{
\begin{array}{ll}
\mypoi(k,\frac{F_t^{-1}(x)F_t^{-1}(y)}{\rho(W)}) & \mbox{ if $x \neq y$}\\
\ind [2|k]\cdot \mypoi \left(\frac{k}{2},\frac{F_t^{-1}(x)F_t^{-1}(y)}{2\rho(W)}\right) & \mbox{ if $x= y$}
\end{array} \right.
 \end{equation}
and $F_t^{-1}$ is the inverse function of $F_t(x)=\int_0^x f(t,y)\, \mathrm{d}y$ where 
(recall \eqref{atmenetsurusegfuggveny_CIR} and note that $f(t,z,y)$ also depends on the parameters $\kappa$ and $\rho$)
\begin{equation}
\label{CIR_densityfunction}
 f(t,y)=\int_0^\infty  f(t,z,y)
\, \mathrm{d}F_0(z),
 \end{equation}
 and
 $F_0(x)= \int_0^1 \ind[D(W,y) \leq x]\, \mathrm{d}y$, $x \in [0,+\infty)$.
\end{theorem}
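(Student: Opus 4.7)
The proof rests on a \emph{separation of timescales}: on the $n^3$ timescale the degrees of finitely many marked vertices evolve as independent Cox-Ingersoll-Ross diffusions with parameters $\kappa$ and $\rho(W)$, while the edge multiplicities $X_n(T,i,j)$ between fixed pairs relax on the much shorter $n^2$ timescale of Theorem \ref{theorem_elek_fejlodese} and are therefore already in the stationary M/M/$\infty$-regime. Since $\lim_{t\to\infty}\mysor(t,h,k,\mu)=\mypoi(k,\mu)$ by \eqref{queue_t_0_infty}, this produces Poisson edge multiplicities whose rates are driven by the slow degrees, which is precisely the structure of \eqref{fokszamok_fejlodese_grafon}. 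The exchangeable-array framework of Section \ref{section_vert_ex} will then convert the joint distribution of degrees-and-edges into the multigraphon $\hat{W}_t$.

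The first step is the joint diffusion limit for the rescaled degrees $\rd_n(T,i):=\tfrac{1}{n}d(\mathbf{X}_n(T),i)$, $i=1,\dots,k$. A direct calculation from \eqref{Vold}, \eqref{cW_Vold}, \eqref{Vnew}, in the spirit of the heuristics \eqref{heu_edge_up_rate}-\eqref{heu_edge_down_rate} and using $2m\approx \rho(W)n^2$, yields
\begin{align*}
\condexpect{\Delta \rd_n(T,i)}{\mathbf{X}_n(T)} &= \tfrac{1}{\rho(W)n^3}\bigl(\kappa-\tfrac{\kappa}{\rho(W)}\rd_n(T,i)\bigr)+\Ordo(n^{-4}),\\
\condvar{\Delta\rd_n(T,i)}{\mathbf{X}_n(T)} &= \tfrac{2}{\rho(W)n^3}\,\rd_n(T,i)+\Ordo(n^{-4}),
\end{align*}
with cross-covariances of order $n^{-4}$ between distinct coordinates. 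Accelerating time by the factor $\rho(W)n^3$ therefore yields a generator converging coordinatewise to the CIR generator encoded in \eqref{CIR_infinitesimal_E_D}, with vanishing interactions between the $k$ coordinates. Standard diffusion-approximation results (the Stroock-Varadhan martingale-problem technique, or the generator convergence theorem of Ethier-Kurtz) then give joint convergence of $(\rd_n(\lfloor t\rho(W)n^3\rfloor,i))_{i=1}^k$ to $k$ \emph{independent} CIR processes $(Z_t^{(i)})_{i=1}^k$ once the initial values are matched. The hypothesis $B_n\to W$, together with a uniform random relabeling of vertices and the exchangeable-array dictionary of Section \ref{section_vert_ex}, makes $\rd_n(0,i)\to D(W,U_i)$ in distribution with $U_i$ i.i.d.\ $U[0,1]$, so $Z_0^{(i)}$ has distribution $F_0$. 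Tightness and the exponential-moment bounds that justify the generator convergence come from \eqref{exponential_moment_initial} combined with the technical lemmas of Section \ref{section_technical_bounds}.

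The second step handles the edges via a time-window argument. Fix $t>0$ and choose $\omega=\omega(n)$ with $\omega\to\infty$ and $\omega/n\to 0$; set $T_*:=\lfloor t\rho(W)n^3\rfloor-\omega n^2$. On the $n^3$ scale the degrees change by $\Ordo(\omega/n)=\ordo(1)$ between $T_*$ and $\lfloor t\rho(W)n^3\rfloor$, so $\rd_n(T_*,i)$ has the same CIR limit $Z_t^{(i)}$. Relative to the configuration at time $T_*$, the remaining $\omega n^2$ steps constitute precisely the setup of Theorem \ref{theorem_elek_fejlodese} with internal time $\asymp 2\omega/\rho(W)\to\infty$; the right-hand limit of \eqref{queue_t_0_infty} then identifies the joint law of $\bigl(X_n(\lfloor t\rho(W)n^3\rfloor,i,j)\bigr)$ over any finite collection of pairs, conditionally on the degree limits, as independent Poisson variables with rates $Z_t^{(i)}Z_t^{(j)}/\rho(W)$ (respectively $(Z_t^{(i)})^2/(2\rho(W))$ restricted to even values on the diagonal).

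Assembling the two steps and sampling $k$ vertices uniformly at random, the induced sub-adjacency matrix converges to an exchangeable array whose conditional law given the i.i.d.\ labels $(U_i)$ is the one just described, with $Z_t^{(i)}$ CIR started from $D(W,U_i)$. Pushing forward through the measure-preserving rearrangement $x\mapsto F_t^{-1}(x)$, equivalently replacing $U_i$ by $F_t(Z_t^{(i)})\sim U[0,1]$, recovers exactly the multigraphon $\hat{W}_t$ of \eqref{fokszamok_fejlodese_grafon} modulo the $\cong$ equivalence \eqref{W_cong}. The main obstacle is supplying enough \emph{uniformity} in the edge-equilibration step: one must apply Theorem \ref{theorem_elek_fejlodese} with a random ``initial condition'' given by the time-$T_*$ configuration rather than by a fixed multigraphon, and verify that the $\Ordo(n^{-1})$ per-step errors in the jump rates do not accumulate over $\Theta(n^3)$ steps. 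The exponential moment bound \eqref{exponential_moment_initial} together with the technical lemmas of Section \ref{section_technical_bounds}, which keep the degree tails under control throughout the evolution, are the key tools here.
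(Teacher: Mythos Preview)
Your proposal is correct and matches the paper's approach: Lemma \ref{lemma_degrees_converge_precise} (proved via the Isp\'any--Pap diffusion-approximation theorem rather than Ethier--Kurtz, but this is the same generator-convergence idea) gives the CIR limit for the degrees, and Subsection \ref{subsection_elatkotos_proof_of_edgestationarity} carries out exactly your time-window argument with $T_0^n=\lfloor t\rho(W)n^3\rfloor-\lfloor n^\nu\rfloor$, $2<\nu<5/2$, followed by Lemma \ref{lemma_kicsi} to reach the Poisson stationary law of the M/M/$\infty$ queues. Two small refinements: the paper does not invoke Theorem \ref{theorem_elek_fejlodese} as a black box at time $T_0^n$ but instead reuses the explicit coupling \eqref{szoros_a_csat} from its proof, which is precisely what dissolves the ``random initial condition'' issue you flag; and the accumulated coupling error over the window forces the sharper constraint $\nu<5/2$ (equivalently $\omega\ll\sqrt{n}$) rather than merely $\omega/n\to 0$.
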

We prove Theorem \ref{theorem_fokszamok_fejlodese} in Section \ref{section_fokok_fejl}.
Now we devote a few paragraphs to the heuristics behind Theorem \ref{theorem_fokszamok_fejlodese}.
In order to give some insight about \eqref{fokszamok_fejlodese_grafon}, we now give a 
 probabilistic way to generate a random element of $\A_k$ with the same distribution as $\mathbf{X}_{\hat{W}_t}^{[k]}$:

Let us first generate $\mathbf{X}_W^{[k]}$ using the background variables $\left(U_i\right)_{i=1}^k$.
Let us define  $Z_i(0):=D(W,U_i)$. Now $\left(Z_i(0)\right)_{i=1}^k$ are i.i.d. with probability distribution function $F_0$.
We let $\left(Z_i(t)\right)_{t \geq 0}$ evolve in time according to the SDE \eqref{CIR_SDE_sketch}, so that
$\left(Z_i(t)\right)_{i=1}^k$ are i.i.d. with probability distribution function $F_t$. Given the background variables
$\left(Z_i(t)\right)_{i=1}^k$ let $\mathbf{X}_{\hat{W}_t}(i,j) \sim \text{POI}(\frac{Z_i(t)Z_j(t)}{\rho(W)})$ if $i\neq j$,
 and let $\frac{1}{2} \mathbf{X}_{\hat{W}_t}(i,i) \sim \text{POI}(\frac{Z_i(t)Z_i(t)}{2\rho(W)})$.

\medskip
Now we give a  heuristic argument explaining why do C.I.R. processes enter the picture:

Pick $i \in [n]$ and denote by $\rd_n(T):= \frac{1}{n} d(\mathbf{X}_n(T),i)$. It follows from \eqref{Vold} and \eqref{Vnew} that
\begin{gather}\label{sketch_expect_D}
\condexpect{ \rd_n(T+1)-\rd_n(T)}{\mathbf{X}_n(T)}= \frac{ \rd_n(T) +\frac{\kappa}{n}}{2m+n\kappa} - \frac{ \rd_n(T)}{2m}
\approx \frac{1}{2mn} \left( \kappa - \frac{\kappa}{\rho} \rd_n(T)   \right)\\
\label{sketch_var_D}
\condvar{\rd_n(T+1)-\rd_n(T)}{\mathbf{X}_n(T)} \approx
\frac{ \rd_n(T) +\frac{\kappa}{n}}{2mn+n^2\kappa} + \frac{ \rd_n(T)}{2mn} \approx \frac{1}{2mn} 2 \rd_n(T)
\end{gather}
We look at the time evolution of the stochastic process
$Z_t:=\rd_n \left( \lfloor t\cdot 2nm \rfloor \right)$.
In the statement of Theorem \ref{theorem_fokszamok_fejlodese} we used the time scaling 
 $T = \lfloor t \cdot \rho \cdot n^3 \rfloor \approx t \cdot 2mn  $.
 If we let $\mathrm{d}t=\frac{1}{2nm}$ then $T+1$ corresponds to $t + \mathrm{d}t$.
Let $\mathrm{d}  Z_t:= Z_{t+\mathrm{d}t}-Z_t$.
From \eqref{sketch_expect_D} and \eqref{sketch_var_D} we get
\[ \condexpect{ \mathrm{d} Z_t}{Z_t} \approx \left( \kappa - \frac{\kappa}{\rho} Z_t   \right) \mathrm{d}t
\qquad \condvar{ \mathrm{d}  Z_t}{Z_t} \approx 2 Z_t  \mathrm{d}t\]
Thus the process $Z_t$ approximates the solution of the SDE of the C.I.R. process \eqref{CIR_infinitesimal_E_D}.

\subsection{Properties of $W_t$ and $\hat{W}_t$}\label{subsection_properties_W}

In this  subsection we relate some properties of the multigraphons $W_t$ and $\hat{W}_t$ that appear
in Theorem \ref{theorem_elek_fejlodese} and  Theorem \ref{theorem_fokszamok_fejlodese} to the
\emph{configuration model}. But first,  we
show 
\begin{equation}\label{full_characterization_2}
 \lim_{t \to 0_+} W_t \cong W, \qquad
\lim_{t \to \infty} W_t \cong \lim_{t \to 0_+} \hat{W}_t, \qquad
\lim_{t \to \infty} \hat{W}_t \cong \hat{W}_{\infty}
\end{equation}
where $\hat{W}_{\infty}$ is the multigraphon defined by 
\begin{equation}  \label{stac_poi}
\hat{W}_{\infty}(x,y,k) =
 \left\{
\begin{array}{ll}
\mypoi(k,\frac{F^{-1}(x)F^{-1}(y)}{\rho}) & \mbox{ if $x \neq y$}\\
\ind [2|k]\cdot \mypoi \left(\frac{k}{2},\frac{F^{-1}(x)F^{-1}(y)}{2\rho} \right) & \mbox{ if $x= y$}
\end{array} \right.
 \end{equation}
and $F^{-1}$ is the inverse function of $F(x)=\int_0^x \mygamma(y,\kappa,\frac{\kappa}{\rho}) \mathrm{d}y$, see \eqref{def_mygamma}.

In order to make sense of \eqref{full_characterization_2} we define convergence on the space of multigraphons: we say 
that $\lim_{n \to \infty} W_n \cong W$ if $\lim_{n \to \infty} t_=(F,W_n) \to t_=(F,W)$ for all $F \in \M$.
The limit of a convergent sequence is only determined up to the $\cong$ equivalence, see \eqref{W_cong}.

One can see by looking at \eqref{Wt_edge_evolution}, \eqref{MMinfty}, \eqref{def_mybin}, \eqref{def_mypoi} that $W_t$ is a continuous function of $t$.

Similarly, \eqref{fokszamok_fejlodese_grafon}, \eqref{CIR_densityfunction}, \eqref{atmenetsurusegfuggveny_CIR}, 
\eqref{def_mypoi}, \eqref{def_mygamma} imply that $\hat{W}_t$ is a continuous function of $t$. 

If we substitute $t \to 0_+$ into \eqref{Wt_edge_evolution}  we indeed get $W_t \to W$ by \eqref{queue_t_0_infty}.

If we substitute $t \to \infty$ into \eqref{fokszamok_fejlodese_grafon}   we  get $\hat{W}_t \to \hat{W}_{\infty}$ by 
 \eqref{cir_0_infty}.

If we let $t \to \infty$ in \eqref{Wt_edge_evolution} and $t \to 0_+$ in \eqref{fokszamok_fejlodese_grafon}  we get 
\begin{align}\label{semi_stationary_edge_W}
\lim_{t \to \infty} W_t(x,y,k) &\stackrel{\eqref{queue_t_0_infty}}{=}
 \left\{
\begin{array}{ll}
\mypoi(k,\frac{D(W,x)D(W,y)}{\rho(W)}) & \mbox{ if $x \neq y$}  \\
\ind [2|k]\cdot \mypoi \left(\frac{k}{2},\frac{D(W,x)D(W,y)}{2\rho(W)}\right)   & \mbox{ if $x= y$} 
\end{array} \right.
\\
\lim_{t \to 0_+}  \hat{W}_t(x,y,k) &\stackrel{\eqref{cir_0_infty}}{=}
 \left\{
\begin{array}{ll}
\mypoi(k,\frac{F_0^{-1}(x)F_0^{-1}(y)}{\rho(W)}) & \mbox{ if $x \neq y$}\\
\ind [2|k]\cdot \mypoi \left(\frac{k}{2},\frac{F_0^{-1}(x)F_0^{-1}(y)}{2\rho(W)}\right) & \mbox{ if $x= y$}
\end{array} \right.
\end{align}
where $F_0(z)= \int_0^1 \ind[D(W,y) \leq z]\, \mathrm{d}y$ and $F_0^{-1}(x):=\min \{z \, : \, F_0(z)\geq x \}$.
We have $\lim_{t \to \infty} W_t \cong \lim_{t \to 0_{+}} \hat{W}_t$, because the corresponding $W$-random 
multigraphons have the same
distribution, since $D(W,U_i) \sim F_0^{-1}(U_i)$. Thus we have seen that \eqref{full_characterization_2} holds.

\medskip

A well-known way to generate a random multigraph with a prescribed degree sequence  is called the \emph{configuration model}:
 we draw $d(v)$ stubs (half-edges) at each vertex $v$ and then we uniformly choose one from the the set of possible matchings of these stubs. 
 In \cite{RB_SZL} we call such random multigraphs \emph{edge stationary}  and in 
 \cite[Theorem 1]{RB_SZL} we  characterize
the special form of limiting multigraphons that arise as the limit of  edge stationary dense multigraph sequences:
these multigraphons are of form \eqref{stac_poi} where $F$ is a generic probability distribution function on $\R_+$ and
 $F^{-1}(x):=\min \{z \, : \, F(z)\geq x \}$ is the generalized inverse of $F$. The name of edge stationarity comes from the fact that
the space of edge stationary distributions is invariant under
the edge reconnecting dynamics, see \cite[Section 4]{RB_SZL}. The stationary distribution of the edge reconnecting model
 is an example of an edge stationary multigraph, see
\eqref{stac_multigraphon_intro},\eqref{stac_poi}.

The heuristic explanation of the fact that the limiting multigraphon
$\lim_{t \to \infty} W_t$ from \eqref{semi_stationary_edge_W} has the special form that 
appears in  \cite[Theorem 1]{RB_SZL} is as follows:
If $T \approx t \cdot n^2$ where $1 \ll t$  then the degrees of vertices in $\cG_n(0)$ and $\cG_n(T)$ are very close to each other 
(c.f. $D(W,x)\equiv D(W_t,x)$),
 whereas $T$ steps are enough for the model to rearrange and mix the edges, so $\cG_n(T)$ looks edge stationary.
Similarly, for any $t>0$,  $\hat{W}_t$ from \eqref{fokszamok_fejlodese_grafon} also looks edge stationary.

Roughly speaking, if we start the edge reconnecting model from an arbitrary initial multigraph, then we have to run our process for
  $ n^2 \ll T $ steps until $\cG_n(T)$ becomes ``edge stationary'' and run it for 
$ n^3 \ll T$ steps until $\cG_n(T)$  becomes ``stationary''.

\section{Vertex exchangeable random adjacency matrices}\label{section_vert_ex}

In this section we define the notion of vertex exchangeability of random adjacency matrices and recall
 two lemmas from \cite{RB_SZL}: in  Lemma \ref{lemma_homkonv_konv_in_dist}  we relate convergence of dense random multigraphs to
 convergence of the probability measures of the corresponding vertex exchangeable random arrays and in Lemma 
\ref{lemma_uniform_integrabiliy_b} we give sufficient conditions under which convergence of dense random multigraphs
imply convergence of the degree distribution of these graphs, see \eqref{heu_average_degree}.

$ $

Let $\mathbf{X}=\left( X(i,j) \right)_{i,j=1}^n$ denote a random element of $\A_n$.
We say that the distribution $\mathbf{X}$ is \emph{vertex exchangeable} if
  for all permutations $\tau: [n] \to [n]$ 
the $\A_n$-valued random variables
$\left(X(i,j)\right)_{i,j=1}^n$ and $\left(X(\tau(i),\tau(j))\right)_{i,j=1}^n$ have the same distribution:
\begin{equation}\label{finite exchangeability}
\left(X(i,j)\right)_{i,j=1}^n \sim \left(X(\tau(i),\tau(j))\right)_{i,j=1}^n.
\end{equation}
In graph theoretic terms \eqref{finite exchangeability} means that the distribution of the random graph is invariant under the relabeling.
It follows from Definition \ref{def_X_W} that $\mathbf{X}_W^{[k]}$ is vertex exchangeable.

\medskip

In the statements of Theorem \ref{theorem_elek_fejlodese} and Theorem \ref{theorem_fokszamok_fejlodese} the initial state
 of the Markov chain $\mathbf{X}_n(T)=\left( X_n(T,i,j) \right)_{i,j=1}^n$ was the deterministic adjacency matrix $\mathbf{X}_n(0)=B_n$, but if we define
 \begin{equation}\label{permuted_initial_state}
  \hat{X}_n(0,i,j) := B_n\left( \pi(i), \pi(j) \right).
\end{equation}
where $\pi$ denotes a uniformly chosen random permutation of $[n]$ and denote the edge reconnecting
 Markov chain with this initial
 distribution by $\hat{\mathbf{X}}_n(T)$, $T=1,2,\dots$, then
\begin{equation}\label{replace_with_exch_trick}
   \Big( \hat{X}_n(T,i,j) \Big)_{i,j=1}^n \sim \Big( X_n(T,\pi(i),\pi(j)) \Big)_{i,j=1}^n,
\quad   t_{=}(A, \mathbf{X}_n) \sim t_{=}(A, \hat{\mathbf{X}}_n),
\end{equation}
thus we get that the assertion of Theorem \ref{theorem_elek_fejlodese} and 
Theorem \ref{theorem_fokszamok_fejlodese} holds for $\mathbf{X}_n(T)$ if and only if it holds for $\hat{\mathbf{X}}_n(T)$.
 From now on we are going to use this trick to replace $\mathbf{X}_n(T)$ by $\hat{\mathbf{X}}_n(T)$ and assume
 that the distribution 
of $\mathbf{X}_n(T)$ is vertex exchangeable.

\medskip

If $\mathbf{X}_n$ is a random element of $\A_n$ for each $n \in \N$ 
 and $W$ is a multigraphon then we say that $\mathbf{X}_n$ converges in distribution to $\mathbf{X}_W$
as $n \to \infty$ (or briefly denote $\mathbf{X}_n \toind \mathbf{X}_W$) if for all $k \in \N$ we have 
$\mathbf{X}_n^{[k]} \toind \mathbf{X}_W^{[k]}$, i.e.
\begin{equation*}
 \forall \, k \in \N, \; A \in \A_n: \; \; \lim_{n \to \infty} \prob{ \mathbf{X}_n^{[k]}=A } 
= \prob{ \mathbf{X}_W^{[k]}=A }\stackrel{\eqref{homind_grafon_valszamosan}}{=}t_{=}(A,W)
\end{equation*}
Recall that we say that $\mathbf{X}_n \toinp W$ if
\begin{equation*}
 \forall\, k \in \N \;\; \forall \, A \in \A_k   \;\; \forall \, \varepsilon>0:  \;
\lim_{n \to \infty} \prob{ \abs{ t_{=}(A, \mathbf{X}_n) - t_{=}(A, W) }>\varepsilon}=0.
\end{equation*}

We state here  \cite[Lemma 3.1]{RB_SZL} without proof:
\begin{lemma}\label{lemma_homkonv_konv_in_dist}
 Let $\mathbf{X}_n=\left( X_n(i,j) \right)_{i,j=1}^n$ be a random, vertex exchangeable element of  $\A_n$ for all $n \in \N$. 
The following statements are equivalent:
\begin{equation}\label{eq__homkonv_konv_in_dist}
 \mathbf{X}_n \toinp W \quad \iff \quad
 \mathbf{X}_n \toind \mathbf{X}_W.
\end{equation}
\end{lemma}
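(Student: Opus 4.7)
The approach is to exploit vertex exchangeability to translate between the random homomorphism densities $t_=(A,\mathbf{X}_n)$ and the marginal laws of $\mathbf{X}_n^{[k]}$. The central computational fact I will use is that, for any $A \in \A_k$, grouping the maps $\varphi: [k]\to[n]$ in \eqref{hom_ind} by injectivity gives
\begin{equation} \label{plan_first_moment}
\expect{t_=(A,\mathbf{X}_n)} = \prob{\mathbf{X}_n^{[k]}=A} + O(1/n),
\end{equation}
since the $n(n-1)\cdots(n-k+1) = n^k(1-O(1/n))$ injective $\varphi$'s all contribute $\prob{\mathbf{X}_n^{[k]}=A}$ by vertex exchangeability \eqref{finite exchangeability}, and the remaining $O(n^{k-1})$ non-injective maps contribute at most $O(1/n)$ uniformly since probabilities are bounded by $1$.

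For the forward implication ($\mathbf{X}_n \toinp W \Rightarrow \mathbf{X}_n \toinp \mathbf{X}_W$), I apply bounded convergence: since $t_=(A,\mathbf{X}_n)\in[0,1]$ converges in probability to the constant $t_=(A,W)$, we get $\expect{t_=(A,\mathbf{X}_n)} \to t_=(A,W)$. Combining with \eqref{plan_first_moment} and \eqref{homind_grafon_valszamosan} yields $\prob{\mathbf{X}_n^{[k]}=A} \to \prob{\mathbf{X}_W^{[k]}=A}$ for every $A \in \A_k$, which is exactly $\mathbf{X}_n^{[k]} \toinp \mathbf{X}_W^{[k]}$ in the sense of \eqref{eobankonv_detailed}.

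For the reverse implication ($\mathbf{X}_n \toinp \mathbf{X}_W \Rightarrow \mathbf{X}_n \toinp W$) I use the second-moment method. The hypothesis together with \eqref{plan_first_moment} immediately gives $\expect{t_=(A,\mathbf{X}_n)} \to t_=(A,W)$. Expanding the square,
\begin{equation*}
\expect{t_=(A,\mathbf{X}_n)^2} = \frac{1}{n^{2k}} \sum_{\varphi,\psi : [k]\to[n]} \prob{A(i,j) = X_n(\varphi(i),\varphi(j)) = X_n(\psi(i),\psi(j))\; \forall i,j},
\end{equation*}
and restricting to the $(1-O(1/n))$-fraction of pairs $(\varphi,\psi)$ whose combined image has $2k$ distinct elements, vertex exchangeability identifies each such summand with the probability that the two diagonal $k\times k$ blocks of $\mathbf{X}_n^{[2k]}$ on the index sets $\{1,\dots,k\}$ and $\{k+1,\dots,2k\}$ both equal $A$. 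Summing over the finitely many compatible $B \in \A_{2k}$ and invoking the hypothesis at level $2k$ gives
\begin{equation*}
\expect{t_=(A,\mathbf{X}_n)^2} \to \prob{(X_W(i,j))_{i,j=1}^{k} = A,\; (X_W(i,j))_{i,j=k+1}^{2k} = A} = t_=(A,W)^2,
\end{equation*}
where the final equality holds because, by Definition \ref{def_X_W}, the two displayed blocks of $\mathbf{X}_W^{[2k]}$ depend only on the disjoint sets of i.i.d.\ background variables $\{U_1,\dots,U_k\}$ and $\{U_{k+1},\dots,U_{2k}\}$, hence are independent and each distributed as $\mathbf{X}_W^{[k]}$, to which I apply \eqref{homind_grafon_valszamosan}. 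Therefore $\var{t_=(A,\mathbf{X}_n)} \to 0$, which combined with the convergence of the mean yields $t_=(A,\mathbf{X}_n) \toinp t_=(A,W)$.

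The main technical obstacle is the careful bookkeeping of the $O(1/n)$ corrections arising from non-injective maps in the first moment and from pairs with non-disjoint image in the second; these are essentially counting exercises and are harmless because probabilities are bounded by $1$. The conceptual crux is the factorization of the limiting second moment into $t_=(A,W)^2$, which is precisely where the i.i.d.\ structure of the background variables $(U_i)$ in the definition of the $W$-random multigraph enters, making the two diagonal blocks of $\mathbf{X}_W^{[2k]}$ independent.
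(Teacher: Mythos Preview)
The paper does not actually prove this lemma: it is quoted verbatim as Lemma~3.1 of \cite{RB_SZL} and stated ``without proof''. So there is no in-paper argument to compare against; your first/second moment approach via vertex exchangeability is the standard one (this is essentially the Diaconis--Janson argument for graphons, adapted to multigraphs) and it is correct in outline.

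One point needs repair. In the reverse implication you write ``summing over the finitely many compatible $B\in\A_{2k}$''. This is false: the off-diagonal $k\times k$ block of $B$ may take any value in $\N_0^{k\times k}$, so the set of compatible $B$ is infinite. What you actually need is that the event
\[
\bigl\{(X_n(i,j))_{i,j=1}^k=A,\ (X_n(i,j))_{i,j=k+1}^{2k}=A\bigr\}
\]
has converging probability under the hypothesis $\mathbf{X}_n^{[2k]}\toinp\mathbf{X}_W^{[2k]}$. This does follow, but not from finiteness: since \eqref{eobankonv_detailed} gives pointwise convergence of probability mass functions on the countable set $\A_{2k}$ to a \emph{proper} probability distribution, Scheff\'e's lemma upgrades this to total variation convergence, and hence the probability of any event converges. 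Once you replace the ``finitely many'' sentence with this observation, the proof goes through as written.
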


For a real-valued nonnegative random variable $X$ define
\[\expect{ X ; m}:= \expect{ X \cdot \ind[ X \geq m]}.\]
A sequence of  real-valued nonnegative random variables $\left(X_n\right)_{n=1}^{\infty}$ is uniformly integrable 
(see  \cite[Chapter 13]{williams}) if
\begin{equation*}
\lim_{m \to \infty} \max_{n} \expect{ X_n ; m}=0.
\end{equation*}

We state here a special case of \cite[Lemma 3.2/(ii)]{RB_SZL} without proof:
\begin{lemma} \label{lemma_uniform_integrabiliy_b}
If  $\mathbf{X}_n$ is a random vertex exchangeable element  of $\A_n$ for each $n \in\N$, 
$\mathbf{X}_n \toind \mathbf{X}_W$ holds for some
 multigraphon $W$
and  the sequences \[\left(X_n(1,1)\right)_{n=1}^{\infty} \quad \text{and} \quad
 \left(X_n(1,2)\right)_{n=1}^{\infty}\] are uniformly integrable then 
 for all $k \in \N$ we have
\begin{equation*}
 \left( \mathbf{X}_n^{[k]},
 \left( \frac{1}{n} d(\mathbf{X}_{n},i)\right)_{i=1}^k \right) \toind
 \left( \mathbf{X}_W^{[k]},
 \left( D(W,U_i)\right)_{i=1}^k \right),
\end{equation*}
where $\mathbf{X}_W^{[k]}$ is generated using the background variables $\left(U_i\right)_{i=1}^k$ according to
Definition \ref{def_X_W}.
\end{lemma}

\section{Bounds on multiple edges and degrees}\label{section_technical_bounds}

In this section we state and prove Lemma \ref{lemma_regularity} which, roughly speaking, states that degrees and multiple edges in the
edge reconnecting model remain well-behaved.

\medskip

If we replace the initial matrix $B_n$ with its vertex exchangeable version $\left(X_n(0,i,j)\right)_{i,j=1}^n$ using the trick
\eqref{permuted_initial_state} then the 
technical condition \eqref{exponential_moment_initial} becomes
 \begin{equation}\label{exponential_moment_initial_ketto}
\exists \, \lambda>0,\; C<+\infty\;\; \forall\, n\; \forall\, i,j \in [n] :\; \quad \expect{e^{\lambda X_n(0, i,j)} } \leq C
 \end{equation}
It is easy to see that \eqref{exponential_moment_initial_ketto} implies
that the sequences $\left(X_n(0,1,2)\right)_{n=1}^{\infty}$ and $\left(X_n(0,1,1)\right)_{n=1}^{\infty}$
are uniformly integrable. If we assume $B_n \to W$ and define $\rho:=\rho(W)$ then
 \begin{multline}\label{edge_density_conv}
\lim_{n \to \infty} \frac{2m(n)}{n^2}= \lim_{n \to \infty} \frac{1}{n^2} \sum_{i,j=1}^n X_n(0,i,j)=
\lim_{n \to \infty} \frac{1}{n^2} \sum_{i,j=1}^n \expect{X_n(0,i,j)} \stackrel{\eqref{finite exchangeability}}{=}\\
\lim_{n \to \infty} \left( \frac{n-1}{n} \expect{ X_n(0,1,2)} + \frac{1}{n} \expect{ X_n(0,1,1)} \right)
\stackrel{\eqref{eq__homkonv_konv_in_dist},\eqref{exponential_moment_initial_ketto}}{=} 
\expect{ X_W(1,2)}
 \stackrel{\eqref{degree_W_expect}}{=}\rho
\end{multline}

$ $

Now we state and prove a lemma which says that if the initial state $\mathbf{X}_n(0)$ of the edge reconnecting model is well-behaved 
(i.e. \eqref{exponential_moment_initial_ketto} holds) then model remains well-behaved at later times $T$ as well:
\begin{enumerate}[(i)]
 \item For all $T =\Ordo(n^3)$ the normalized degree  $D(T)=\frac{1}{n} d( \mathbf{X}_n(T),i)$ of a vertex $i \in [n]$
satisfies $D(T)=\Ordo(1)$ uniformly in $n$, a bit more precisely: $\prob{ D(T) \geq z}$ decays exponentially as $z \to \infty$.
\item  For all $T =\Ordo(n^3)$ the number of parallel/loop edges $X_n(T,i,j)$ between vertices $i,j \in [n]$
satisfies $X_n(T,i,j)=\Ordo(1)$ uniformly in $n$, a bit more precisely: $X_n(T,i,j)$ has finite moments.
\item if $T_1 \leq T_2 =\Ordo(n^3)$ and $T_2-T_1 \ll n^3$ then $D(T_1) \approx D(T_2)$, a bit more precisely:
the second moment of $D(T_1) - D(T_2)$ is $\Ordo(n^{-3} (T_2-T_1))$.
\end{enumerate}

 \begin{lemma} \label{lemma_regularity}
Let us fix $\kappa, \rho \in (0,+\infty)$.
We consider the edge reconnecting model $\mathbf{X}_n(T)$, $T=0,1,\dots$ on the state space $\A_n^{m(n)}$ with a vertex exchangeable initial state
$\mathbf{X}_n(0)$ for $n=1,2,\dots$ satisfying $\lim_{n \to \infty} \frac{2m(n)}{n^2}=\rho$ and \eqref{exponential_moment_initial_ketto}
for some
$\lambda< \frac{\kappa}{\rho}$. Then

\begin{enumerate}[(i)]
\item 
 \label{lemma_i_degree_bound} There exists an $n' \in \N$ such that for every $t,z \in [0,+\infty)$ and $n \geq n'$ we have
\begin{equation}\label{exp_lecseng_a_foxam}
\prob{ \max_{0 \leq T \leq 2mnt} \frac{1}{n} d( \mathbf{X}_n(T),i) \geq z}
\leq C\cdot e^{2\lambda \kappa t} \cdot e^{-\lambda z}
\end{equation}
with the $C$ of \eqref{exponential_moment_initial_ketto}.

\item \label{lemma_ii_edge_bound} For every $p>1$ and $t \in [0,+\infty)$ there exists a
$C'=C'(\kappa,\rho,\lambda, C, p,t)$ (where $C$ is the constant from \eqref{exponential_moment_initial_ketto}) such that
for all $n \in \N$, $i,j \in [n]$ and $T \leq 2mnt$ we have 
\begin{equation}\label{moment_p_bound}
 \expect{X_n(T,i,j)^p}\leq C'.
\end{equation}

\item \label{lemma_iii_degree_change}
There exists a constant $C''=C''(\kappa,\rho,\lambda, C, t)$ such
that for all $n \in \N$, all $T_1 \leq T_2 \leq t \cdot n^3$ and $i \in [n]$ we have
\begin{equation}\label{degree_doesnt_change}
\expect{ \left(\frac{1}{n} d( \mathbf{X}_n(T_2),i)-\frac{1}{n} d( \mathbf{X}_n(T_1),i)\right)^2} \leq \frac{C'' \cdot (T_2-T_1)}{n^3}
\end{equation}
\end{enumerate}
\end{lemma}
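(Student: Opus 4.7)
The plan is to prove the three parts in the given order; parts (\ref{lemma_ii_edge_bound}) and (\ref{lemma_iii_degree_change}) both make essential use of the degree bound from (\ref{lemma_i_degree_bound}), and in all three cases the strategy is to set up an appropriate Lyapunov (super)martingale exploiting the explicit one-step transition probabilities of $\mathbf{X}_n(T)$ and the mean-reversion visible in the heuristics following Theorem~\ref{theorem_fokszamok_fejlodese}.

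For part (\ref{lemma_i_degree_bound}) I would work with the exponential process $M_T := \exp\bigl(\lambda d(\mathbf{X}_n(T),i)/n - \alpha T\bigr)$ and apply Doob's maximal inequality. Writing $P_{\pm} := \condprob{\Delta d(T) = \pm 1}{\mathbf{X}(T)}$, a second-order Taylor expansion of $e^{\pm \lambda/n}$ produces the multiplicative correction $1 + \tfrac{\lambda}{n}(P_+ - P_-) + \tfrac{\lambda^2}{2n^2}(P_+ + P_-) + \Ordo(1/n^4)$. Using the explicit formulas $P_+ - P_- = \kappa(2m - dn)/[2m(2m+n\kappa)]$ and $P_+ + P_- = \Ordo((d+\kappa)/m)$ together with $2m\sim \rho n^2$, the correction reduces to $\lambda\bigl[\kappa\rho - (\kappa - \lambda\rho)D\bigr]/(\rho^2 n^3) + \Ordo(1/n^4)$ with $D := d/n$. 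The assumption $\lambda < \kappa/\rho$ forces the coefficient of $-D$ to be strictly positive, so the expression is maximized at $D = 0$ and bounded above by $\alpha \asymp 2\lambda \kappa/(\rho n^3)$, making $M_T$ a supermartingale. Doob's maximal inequality then gives $\prob{\max_{T'\le T}d(\mathbf{X}_n(T'),i)/n \ge z} \le \expect{M_0}\,e^{\alpha T - \lambda z}$. Finally, applying Jensen's inequality to $e^{\lambda d(0,i)/n} = \exp(\lambda\cdot \tfrac1n\sum_j X_n(0,i,j))$ and using the assumption \eqref{exponential_moment_initial_ketto} yields $\expect{M_0} \le \tfrac{1}{n}\sum_j\expect{e^{\lambda X_n(0,i,j)}} \le C$, and substituting $T = 2mnt$ produces $\alpha T = 2\lambda \kappa t$, exactly matching \eqref{exp_lecseng_a_foxam}.

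For part (\ref{lemma_iii_degree_change}) my plan is the Doob decomposition $d(\mathbf{X}_n(T_2),i) - d(\mathbf{X}_n(T_1),i) = M + A$, where $A := \sum_{T=T_1}^{T_2-1} A_T$ with $A_T := \kappa(2m - d(\mathbf{X}_n(T),i)n)/[2m(2m+n\kappa)]$ and $M$ is the compensated martingale. Orthogonality of martingale increments gives $\expect{M^2} \le \sum_T \expect{(\Delta d(T))^2}$, and since $|\Delta d| \le 1$ with jump probability $\Ordo((d+\kappa)/m)$, part (\ref{lemma_i_degree_bound}) delivers $\expect{M^2} = \Ordo((T_2-T_1)/n)$. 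For the drift, part (\ref{lemma_i_degree_bound}) also gives $|A_T|\le C(1 + d/n)/n^2$, hence $\expect{A_T^2} = \Ordo(1/n^4)$; Cauchy--Schwarz then yields $\expect{A^2} \le (T_2-T_1)\sum_T \expect{A_T^2} = \Ordo((T_2-T_1)^2/n^4)$. Dividing the combined bound by $n^2$ and using $(T_2-T_1)\le tn^3$ to absorb the quadratic contribution into the linear one gives \eqref{degree_doesnt_change}, with the constant $C''$ depending on $t$.

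Part (\ref{lemma_ii_edge_bound}) is where I expect the main obstacle. The birth rate $P_+ \approx d(\mathbf{X}(T),i)d(\mathbf{X}(T),j)/(2m^2)$ of $Y(T) := X_n(T,i,j)$ (for $i\ne j$; the loop case is analogous) is not uniformly small, while only the death rate $P_- \gtrsim Y/m$ provides linear contraction. My plan is to close a moment recursion directly from the dynamics: the expansion of $\condexpect{Y(T+1)^p - Y(T)^p}{\mathbf{X}(T)}$ along the $\pm 1$ jumps yields an inequality of the form $\expect{Y(T+1)^p} \le (1 - cp/m)\expect{Y(T)^p} + \Ordo(1/m^2)\cdot \expect{(d(i)+\kappa)(d(j)+\kappa)(Y+1)^{p-1}}$. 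H\"older's inequality together with the moment bounds that follow from part (\ref{lemma_i_degree_bound}) then controls the mixed expectation by $C n^2 \cdot \expect{(Y+1)^{(p-1)c}}^{1/c}$ for an exponent $c$ chosen so that $(p-1)c \le p$; this matches the prefactor $1/m^2 \sim 1/n^4$. Induction on $p$ (starting from the trivial case $p=0$) closes the recursion, because the per-step contraction factor $1 - cp/m$ accumulated over $T = \Ordo(n^3)$ steps is strong enough that the ``stationary'' $p$-th moment stays bounded by a constant depending only on $p, t$ and the constants of the model. The delicate technical point is aligning the H\"older exponents with the interpolation between consecutive moments so that the induction closes, and keeping track of the lower-order corrections arising from $(Y\pm 1)^p - Y^p \mp pY^{p-1}$ in the Taylor expansion.
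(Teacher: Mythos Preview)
Your argument for part~(\ref{lemma_i_degree_bound}) is exactly the paper's: both build an exponential supermartingale out of $e^{\lambda D(T)}$, use the hypothesis $\lambda<\kappa/\rho$ to kill the coefficient of $D$ in the one-step multiplicative correction, apply Doob's maximal inequality, and close with the Jensen bound $\expect{e^{\lambda D(0)}}\le C$. The only cosmetic difference is that the paper compensates multiplicatively by $\prod_l (1+a(l))^{-1}$ whereas you compensate by $e^{-\alpha T}$.

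For part~(\ref{lemma_iii_degree_change}) the paper takes a slightly shorter route: instead of the Doob decomposition it expands
\[
\expect{(D(T_2{+}1)-D(T_1))^2}=\expect{(D(T_2)-D(T_1))^2}+2\,\expect{\condexpect{\Delta D}{\cF_{T_2}}(D(T_2)-D(T_1))}+\expect{(\Delta D)^2}
\]
and shows, using \eqref{bound_expect_D}, \eqref{bound_secondmoment_D} and the $\Ordo(1)$ moment bounds from part~(\ref{lemma_i_degree_bound}), that each of the last two terms is $\Ordo(n^{-3})$; the claim then follows by one-step induction on $T_2-T_1$. Your Doob decomposition is correct and yields the same $\Ordo((T_2-T_1)/n^3)$ bound; the extra $(T_2-T_1)^2/n^6$ drift term you produce is harmless once absorbed via $T_2-T_1\le t n^3$, at the cost of the same $t$-dependence in $C''$ the paper also has.

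Part~(\ref{lemma_ii_edge_bound}) is where the two proofs genuinely diverge. The paper does \emph{not} run a polynomial moment recursion. Instead it works with the exponential moment $e^{\lambda X(T)}$, introduces the stopping time
\[
\tau_y=\min\Bigl\{T:\tfrac{d(T,i)+\kappa}{2m+n\kappa}\tfrac{d(T,j)}{2m}+\tfrac{d(T,j)+\kappa}{2m+n\kappa}\tfrac{d(T,i)}{2m}>\tfrac{y}{m}\Bigr\},
\]
and shows by a one-dimensional recursion (using Jensen on $X\,e^{\lambda X}$) that $\expect{e^{\lambda X_y(T)}}$ stays bounded by a constant depending only on $y$. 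The $p$-th moment is then bounded by splitting $\prob{X(T)^p\ge x}\le \prob{X_y(T)^p\ge x}+\prob{\tau_y\le T}$, controlling the first piece by Markov on $e^{\lambda X_y}$ and the second by part~(\ref{lemma_i_degree_bound}), and finally optimizing $y=x^{1/(2p)}$. Your moment-recursion route is a legitimate alternative and should go through: taking H\"older exponents $q=2p$ on each degree factor and $r=p/(p-1)$ on $Y^{p-1}$ gives $(p-1)r=p$, so the recursion closes \emph{directly} on $E_T=\expect{Y(T)^p}$ without any induction on $p$; the fixed point of $E\mapsto (1-cp/m)E+Cm^{-1}E^{(p-1)/p}+Cm^{-1}$ is finite and attracting. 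The ``induction on $p$'' you mention is therefore optional --- it is only needed if you insist on strict inequality $(p-1)c<p$, in which case you would bound integer moments first and interpolate. The paper's stopping-time approach trades the H\"older bookkeeping for the localization $\tau_y$ and a final optimization; yours avoids stopping times but needs the careful alignment of exponents you already flagged.
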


$ $

\begin{proof}[Proof of Lemma  \ref{lemma_regularity} \eqref{lemma_i_degree_bound}]
Fix $i \in [n]$ and denote
\[d(T):=d(\mathbf{X}_n(T),i), \qquad   D(T):=\frac{1}{n} d( \mathbf{X}_n(T),i).\]
 Denote by
$\left(\cF_T\right)_{0\leq T}$  the natural filtration generated by the process.

If $a(T):=\condexpect{e^{\lambda \cdot (D(T+1)-D(T))}-1}{\cF_T}$ then
$M(T):=e^{\lambda D(T)} \prod_{l=0}^{T-1}(1+a(l))^{-1}$ is a nonnegative martingale.
By Doob's submartingale inequality we have
\begin{equation}\label{szubmartingal_egyenlotlenseg}
\prob{ \max_{0 \leq T \leq T'} M(T) \geq x} \leq \frac{ \expect{e^{\lambda D(0)}}}{x}
\end{equation}
\begin{equation}\label{exponencialis_konvex}
\max_{0 \leq T \leq T'} M(T) < x \quad \implies \quad \forall\, T \leq T': \;
e^{\lambda D(T)} \leq x \exp\left( \sum_{l=0}^{T-1} a(l) \right)
\end{equation}
Now we give an upper bound on $a(T)$. Using
\begin{equation}\label{degree_ind_evolution}
D(T+1)=D(T)+\frac{1}{n} \ind[\cV_{new}(T)=i]-\frac{1}{n}\ind[\cV_{old}(T)=i],
\end{equation}
 \eqref{Vold}, \eqref{Vnew}
 and the fact that $\cV_{new}(T)$
and $\cV_{old}(T)$ are conditionally independent given $\cF_T$ we get
\begin{multline}\label{a_T_becsles1}
a(T)=\left(1+ \frac{d(T)+\kappa}{2m+n\kappa} (e^{\frac{\lambda}{n}}-1)\right)
\left( 1+\frac{d(T)}{2m}(e^{-\frac{\lambda}{n}}-1)\right)-1 \leq\\
\frac{d(T)+\kappa}{2m+n\kappa}\left( \frac{\lambda}{n} +
\frac12 e^{\frac{\lambda}{n}} \frac{\lambda^2}{n^2}\right)+
\frac{d(T)}{2m}\left( -\frac{\lambda}{n} +
\frac12 e^{\frac{\lambda}{n}} \frac{\lambda^2}{n^2}\right)=\\
\frac{\lambda}{n} \frac{1}{4m^2+n 2m \kappa}
\left( d(T) \cdot \left( e^{\frac{\lambda}{n}} \lambda \cdot
\left( \frac{2m}{n}+\frac12 \kappa \right) -n\kappa\right)
+2m\kappa \cdot \left( 1+ \frac12 e^{\frac{\lambda}{n}} \frac{\lambda}{n}\right) \right)
\end{multline}
Now $\lambda<\frac{\kappa}{\rho}$ and $\lim_{n \to \infty} \frac{2m(n)}{n^2}=\rho$, thus if $n$ is big enough
 then $\lambda<e^{-\frac{\lambda}{n}} \cdot \frac{\kappa}{\rho +\frac12 \frac{\kappa}{n}}$, which implies that the coefficient of $d(T)$ is negative in 
the right hand side of \eqref{a_T_becsles1}, thus
\begin{equation}\label{a_T_becsles2}
a(T) \leq \frac{1}{2mn} \lambda \kappa \frac{1+\frac12 \frac{\lambda}{n} e^{\frac{\lambda}{n}}}{1+ \frac{n \kappa}{2m}} \leq
 \frac{1}{2mn} 2 \lambda \kappa.
\end{equation}
From \eqref{szubmartingal_egyenlotlenseg}, \eqref{exponencialis_konvex} and \eqref{a_T_becsles2} it follows that
\begin{equation*}
\prob{\max_{0 \leq T \leq 2mnt} e^{\lambda D(T)} \geq
x \exp(2 \lambda \kappa t)} \leq \frac{ \expect{e^{\lambda D(0)}}}{x}
\end{equation*}
Substituting $x=\exp(-2\kappa \lambda t)\exp(\lambda z)$ and using
\begin{equation*}
\expect{ e^{\lambda D(0)}}=
\expect{ \exp\left( \frac{1}{n} \sum_{j=1}^n \lambda X(0,i,j)\right)} \leq
\expect{ \frac{1}{n} \sum_{j=1}^n \exp(\lambda X(0,i,j))} \stackrel{ \eqref{exponential_moment_initial_ketto} }{\leq}  C
\end{equation*}
we arrive at \eqref{exp_lecseng_a_foxam}.
\end{proof}

$ $

\begin{proof}[Proof of Lemma  \ref{lemma_regularity} \eqref{lemma_ii_edge_bound}]
Fix $n$ and $i, j \in [n]$. We only prove the statement of the lemma if $i \neq j$, the proof of the diagonal case is similar.
   Denote by
\[ X(T):=X_n(T,i,j), \qquad d(T,i):= d( \mathbf{X}_n(T),i), \qquad D(T,i)=\frac1n d(T,i).\]
 Using \eqref{edge_evolution_ind} we get
\begin{multline}\label{el_behuzas_valsege}
\condprob{X(T+1)=X(T)+1}{\cF_T} =\\
 \frac{d(T,i)+\kappa}{2m+n \kappa}
\left( \frac{d(T,j)}{2m} \left( 1- \frac{X(T)}{d(T,j)} \right) \right) +
\frac{d(T,j)+\kappa}{2m+n \kappa} \left( \frac{d(T,i)}{2m}
\left( 1- \frac{X(T)}{d(T,i)} \right) \right)
\end{multline}
\begin{equation}\label{el_torles_valsege}
\condprob{X(T+1)=X(T)-1}{\cF_T}=\frac{X(T)}{2m}
\left(1- \frac{d(T,i)+\kappa}{2m+n \kappa} \right) +
\frac{X(T)}{2m} \left(1-\frac{d(T,j)+\kappa}{2m+n \kappa} \right)
\end{equation}
From this it is straightforward to derive
\begin{multline*}
\condexpect{e^{\lambda X(T+1)} - e^{\lambda X(T)}}{\cF_T} \leq \\
e^{\lambda X(T)} \left( (e^{\lambda}-1)
\left( \frac{d(T,i)+\kappa}{2m+n \kappa} \frac{d(T,j)}{2m}+
\frac{d(T,j)+\kappa}{2m+n \kappa} \frac{ d(T,i)}{2m} \right)+
(e^{-\lambda}-1) \frac{X(T)}{m}\right)
\end{multline*}
Define the stopping time
\[ \tau_y:= \min \left\{ T \, : \, \frac{d(T,i)+\kappa}{2m+n \kappa} \frac{d(T,j)}{2m}+
\frac{d(T,j)+\kappa}{2m+n \kappa} \frac{ d(T,i)}{2m} > \frac{y}{m} \right\} \]
and $X_y(T):=X(T) \ind [ \tau_y > T ]$. Now we prove that for all $T \in \N$
\begin{equation}\label{we_prove_18}
\expect{ e^{\lambda X_y(T)}} \leq
\max \left\{ C \, , \,  \exp( y \lambda e^{\lambda})
\left( 1+ \frac{(e^{\lambda}-1)y}{m} \right) \right\}.
\end{equation}

It is straightforward to check that
\begin{equation}\label{condexpect_14}
\condexpect{e^{\lambda X_y(T+1)} - e^{\lambda X_y(T)}}{\cF_T} \leq
e^{\lambda X_y(T)}\left( (e^{\lambda}-1) \frac{y}{m} + (e^{-\lambda}-1) \frac{X_y(T)}{m} \right).
\end{equation}
If we denote $E(T):= \expect{e^{\lambda X_y(T)}}$, take the expectation of \eqref{condexpect_14} and use Jensen's inequality then we get
\begin{equation}\label{expect_jensen_E}
E(T+1)-E(T) \leq \frac{E(T)}{m}
\left( (e^{\lambda}-1)y+(e^{-\lambda}-1)\frac{ \log(E(T))}{\lambda} \right).
\end{equation}
We prove \eqref{we_prove_18} by induction. For $T=0$ we use
\eqref{exponential_moment_initial_ketto}.
If $E(T)>\exp( y \lambda e^{\lambda})$, then by \eqref{expect_jensen_E}
 $E(T+1)<E(T)$ and if $E(T) \leq \exp( y \lambda e^{\lambda})$ then
 \[
 E(T+1) \leq E(T)+\exp( y \lambda e^{\lambda})\frac{(e^{\lambda}-1)y}{m}
 \leq \exp( y \lambda e^{\lambda})
\left( 1+ \frac{(e^{\lambda}-1)y}{m} \right).
\]
Having established \eqref{we_prove_18} we prove \eqref{moment_p_bound} by showing that
\[\expect{X(T)^p}\leq 1+ \int_1^{\infty} \prob{ X(T)^p \geq x} \, \mathrm{d}x<+\infty.\]
\begin{multline*}
\prob{X(T)^p \geq x}
\leq \prob{ X_y(T)^p \geq x}+\prob{ X(T) \neq X_y(T)} \leq
\frac{ \expect{e^{\lambda X_y(T)}}}{\exp(\lambda x^{1/p})}+\prob{ \tau_y>T}
\stackrel{\eqref{we_prove_18} }{ \leq }\\
\frac{C_1 \exp(C_2 y)}{\exp(\lambda x^{1/p})}+
\prob{\max_{T \leq 2nmt} D(T,i)D(T,j)> C_3 y} \stackrel{ \eqref{exp_lecseng_a_foxam} }{\leq} C_1 \exp( C_2 y -\lambda x^{1/p})+C_4 e^{-C_5 \sqrt{y}}
\end{multline*}
Now choosing $y=x^{1/{2p}}$ we indeed get $\int_1^{\infty} \prob{ X(T)^p \geq x} \, \mathrm{d}x<+\infty$.
\end{proof}

$ $

\begin{proof}[Proof of Lemma \ref{lemma_regularity} \eqref{lemma_iii_degree_change}]
Fix $i \in [n]$.
We use the notation $D(T)=\frac{1}{n} d( \mathbf{X}_n(T),i)$. We say that
$a_n =\Ordo(b_n)$ if there exists a constant $c$ depending only on
 $\kappa$,$\rho$,$\lambda$,$C$ and $t$ such that $a_n \leq c \cdot b_n$ for all $n \in \N$.
 It follows from  \eqref{exp_lecseng_a_foxam} that
\begin{equation}\label{ordoegy}
 \forall \, T \leq t \cdot n^3: \; \;
 \expect{D(T)}=\Ordo(1) \qquad
 \forall \, T, T' \leq t \cdot n^3: \; \;
 \expect{D(T)D(T')}=\Ordo(1).
\end{equation}
Using
\eqref{degree_ind_evolution}, \eqref{Vold}, \eqref{Vnew}
 and the fact that $\cV_{new}(T)$
and $\cV_{old}(T)$ are conditionally independent given $\cF_T$ we get
\begin{multline*}
\condexpect{ D(T+1)-D(T)}{\cF_T}= \frac{ D(T) +\frac{\kappa}{n}}{2m+n\kappa} - \frac{ D(T)}{2m}= \frac{\kappa}{2mn+n^2\kappa} -\frac{n \kappa D(T)}{4m^2+2mn \kappa},
\end{multline*}
\begin{multline*}
\condexpect{(D(T+1)-D(T))^2}{\cF_T} =
\frac{1}{n^2}\left( \frac{n D(T) + \kappa}{2m+n \kappa}+\frac{n D(T)}{2m}
-2 \frac{n D(T) + \kappa}{2m+n \kappa}\frac{n D(T)}{2m} \right).
\end{multline*}

We prove  \eqref{degree_doesnt_change} by induction on $T_2-T_1$.
\begin{multline*}
\expect{(D(T_2+1)-D(T_1))^2}= \expect{(D(T_2)-D(T_1))^2}+\\
2 \expect{ \condexpect{ D(T_2+1)-D(T_2)}{\cF_{T_2}}(D(T_2)-D(T_1))} +
\expect{(D(T_2+1)-D(T_2))^2} \stackrel{\eqref{ordoegy}}{=}\\
 \expect{(D(T_2)-D(T_1))^2}+
\Ordo\left(\frac{1}{n^3}\right)
\end{multline*}
\end{proof}

$ $

We state a lemma about the speed of convergence of the M/M/$\infty$-queue to its stationary distribution.
\begin{lemma}\label{lemma_kicsi}
Let $Y_t$ be an
$\N_0$-valued continuous-time Markov chain with infinitesimal jump rates \eqref{MMinfty_up},\eqref{MMinfty_down},\eqref{MMinfty_stay} and initial state
 $h \in \N_0$.
Then for all $t \geq 0$ and $l \in \N_0$ we have
\begin{equation}\label{lemma_kicsi_eq}
\abs{ \prob{ Y_t = l} -\lim_{s \to \infty} \prob{ Y_s = l}} \leq e^{-t}\cdot (h+ \mu)
\end{equation}
\end{lemma}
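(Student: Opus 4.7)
The plan is to build an explicit coupling between the process $Y_t$ started at $h$ and a stationary copy $\tilde Y_t$, in such a way that with high probability they agree at time $t$.

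First I would represent both processes using the decomposition implicit in \eqref{MMinfty}. Write $Y_t = B + N$ where $B\sim \text{BIN}(h, e^{-t})$ counts the original $h$ customers still in service (each i.i.d.\ survives the service with probability $e^{-t}$), and $N\sim \text{POI}((1-e^{-t})\mu)$ counts the new arrivals in $(0,t]$ that are still in service at time $t$; these are independent. Analogously, a stationary queue $\tilde Y_t\sim \text{POI}(\mu)$ can be written as $\tilde Y_t = B' + N'$, where $B'\sim \text{POI}(\mu e^{-t})$ comes from Poisson thinning of the stationary number of customers already present at time $0$ (which is $\text{POI}(\mu)$) with survival probability $e^{-t}$, and $N'\sim \text{POI}((1-e^{-t})\mu)$ comes from arrivals in $(0,t]$, independent of $B'$. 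That $B'+N'\sim \text{POI}(\mu)$ follows from the Poisson convolution identity, consistent with \eqref{queue_t_0_infty}.

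Next I would construct $Y_t$ and $\tilde Y_t$ on one probability space by taking $N=N'$ (same driving Poisson arrivals in $(0,t]$ with the same independent exponential lifetimes) and taking $B,B'$ independent of each other. Then on the event $\{B=0\}\cap\{B'=0\}$ we have $Y_t=N=N'=\tilde Y_t$, so
\begin{equation*}
\bigl| \prob{Y_t=l} - \prob{\tilde Y_t=l} \bigr| \leq \prob{Y_t \neq \tilde Y_t} \leq \prob{B>0} + \prob{B'>0}.
\end{equation*}
Now $\prob{B>0} = 1-(1-e^{-t})^h \leq h e^{-t}$ by Bernoulli's inequality (or a union bound over the $h$ surviving indicators), while $\prob{B'>0} \leq \expect{B'} = \mu e^{-t}$ by Markov. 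Summing gives the claimed bound $(h+\mu)e^{-t}$, and since $\prob{\tilde Y_t=l}=\lim_{s\to\infty}\prob{Y_s=l}$, this is \eqref{lemma_kicsi_eq}.

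There is no serious obstacle; the only point to be careful about is to verify that the ``stationary decomposition'' $\tilde Y_t = B'+N'$ is legitimate, i.e.\ that in a stationary M/M/$\infty$ queue the number of customers present at time $0$ that are still present at time $t$ is $\text{POI}(\mu e^{-t})$ independent of the arrivals in $(0,t]$. This follows from the memoryless property of the exponential service times together with the standard Poisson thinning argument, both of which are essentially built into the formula \eqref{MMinfty}.
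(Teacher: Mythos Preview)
Your proof is correct and is essentially identical to the paper's: the paper also couples $Y_t$ with a $\text{POI}(\mu)$ variable by sharing the common $\text{POI}((1-e^{-t})\mu)$ summand and then bounds the probability that either the $\text{BIN}(h,e^{-t})$ or the $\text{POI}(\mu e^{-t})$ piece is nonzero, arriving at $1-(1-e^{-t})^h + 1-\exp(-\mu e^{-t}) \le (h+\mu)e^{-t}$. The only cosmetic difference is that the paper works purely distributionally via \eqref{MMinfty} without invoking the ``surviving customers'' interpretation you use to motivate the decomposition.
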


\begin{proof}[Proof of Lemma \ref{lemma_kicsi}]
According to \eqref{queue_t_0_infty} $Y_s \toind \text{POI}(\mu)$ as $s \to \infty$.
Let
\[ Y_t^{bin} \sim \text{BIN}(h,e^{-t}), \quad  Y_t^{poi} \sim \text{POI}((1-e^{-t})\mu), \quad Y_t^{\infty} \sim \text{POI}(e^{-t} \mu)\] be 
mutually independent random variables.

By \eqref{MMinfty} we have
 $Y_t^{bin}+Y_t^{poi} \sim Y_t$ and $Y_t^{poi}+Y_t^{\infty} \sim \text{POI}(\mu)$.
\begin{multline*}
\abs{ \prob{ Y_t = l} -\lim_{s \to \infty} \prob{ Y_s = l}} =
\abs{ \prob{ Y_t^{bin}+Y_t^{poi} = l} - \prob{ Y_t^{poi}+Y_t^{\infty} = l}} \leq \\
\prob{  Y_t^{bin}+Y_t^{poi} \neq Y_t^{poi}+Y_t^{\infty}} \leq
\prob{ Y_t^{bin} \neq 0}+ \prob { Y_t^{\infty} \neq 0} =\\
1-(1-e^{-t})^h +(1-\exp(-e^{-t} \mu)) \leq e^{-t}\cdot (h+ \mu)
\end{multline*}

\end{proof}

\section{Proof of Theorem \ref{theorem_elek_fejlodese}}
\label{section_proof_of_thm_elek}

In this section we prove Theorem \ref{theorem_elek_fejlodese}
 by coupling the evolution of multiple edges between the vertices
$1 \leq i \leq j \leq k$ to $\binom{k+1}{2}$ independent M/M/$\infty$-queues.

$ $

Given  a random element $\mathbf{X}$  of $\A_k$
 we define the modified adjacency matrix $\mathbf{X}^*$ in the following way:  let
$X^*(i,j):=X(i,j)$ if $i \neq j$ and $X^*(i,i):=\frac12 X(i,i)$.

We assume that the distribution of $\mathbf{X}_n(T)$ is vertex exchangeable
(see the paragraph after \eqref{replace_with_exch_trick}).
We are going to prove \eqref{thm_elek_statement_conv_graphlim}
  using Lemma \ref{lemma_homkonv_konv_in_dist}: we only need to show that for all $k \in \N$ and
 $t \geq 0$ we have
 \begin{equation}\label{thm_edges_formula_convindist}
  \mathbf{X}_n^{[k]} \left( \lfloor t \cdot \frac{ \rho(W) \cdot n^2}{2}
 \rfloor \right)  \toind \mathbf{X}_{W_t}^{[k]} \quad \text{ as } \quad n \to \infty.
 \end{equation}
 Note that the evolution of
 $\left(\mathbf{X}_n^{[k]}(T), \left( d(\mathbf{X}_n(T),i)\right)_{i=1}^k \right)$
 is itself a Markov chain under the edge reconnecting dynamics.

We are going to prove \eqref{thm_edges_formula_convindist} by coupling the $\A_k$-valued discrete-time process
$\mathbf{X}_n^{[k]}(T)$
 to an $\A_k$-valued continuous-time process $\mathbf{Y}_n^{[k]}(t)$
 which we define now:
\begin{itemize}
\item The initial states are the same:
 $\forall \, i,j \in [k]:\; Y_n(0,i,j)=X_n(0,i,j)$.
\item Given $\mathbf{Y}_n^{[k]}(0)=\mathbf{X}_n^{[k]}(0)$, the evolution
 of $Y_n(t,i,j)$ is a continuous-time Markov process for each $i,j \in [k]$,
 the entries $\left(Y_n(t,i,j)\right)_{i\leq j \leq k}$ evolve independently and
 $Y_n(t,i,j)\equiv Y_n(t,j,i)$, thus $\mathbf{Y}_n^{[k]}(t)$ is a random element of $\A_k$.
\item The process $Y^*_n(t,i,j)$ is an M/M/$\infty$-queue (see \eqref{MMinfty_up}, \eqref{MMinfty_down}, \eqref{MMinfty_stay})
  with service rate $1$ and arrival rate
\begin{equation}\label{mminfty_rate_in_Y_i_j}
 \mu=\mu_{i,j}:=\frac{ d(\mathbf{X}_n(0),i)d(\mathbf{X}_n(0),j)}{2m(n)\cdot (1+\ind[i=j]) }.
\end{equation}
\end{itemize}

Now we show that for all $t \geq 0$
 \begin{equation}\label{Y_tablazat_konv_eo}
  \mathbf{Y}_n^{[k]}(t)  \toind \mathbf{X}_{W_t}^{[k]} \quad \text{ as } \quad n \to \infty.
 \end{equation}
From the assumptions $\mathbf{X}_n(0) \toinp W$, \eqref{exponential_moment_initial_ketto} and Lemma
 \ref{lemma_uniform_integrabiliy_b}  it follows that
\begin{equation*}
 \left( \mathbf{Y}_n^{[k]}(0),
 \left( \frac{1}{n} d(\mathbf{X}_n(0),i)\right)_{i=1}^k \right) \toind
 \left( \mathbf{X}_W^{[k]},
 \left( D(W,U_i)\right)_{i=1}^k \right).
\end{equation*}
Now \eqref{Y_tablazat_konv_eo}  easily follows from this, \eqref{MMinfty},
\eqref{edge_density_conv}, Definition \ref{def_X_W}
and \eqref{Wt_edge_evolution}.

 Denote by $\rd_n(T,i):=\frac{1}{n}d(\mathbf{X}_n(T),i)$.
We are going to construct a coupling (joint realization on the same probability space)
  of the discrete time $\A_k$-valued Markov chains  $\mathbf{X}_n^{[k]}(T)$ and
 $\mathbf{Y}_n^{[k]}\left(\frac{T}{m(n)}\right) $ for $T=0,1,\dots$ such that for any $\nu<\frac{5}{2}$ we have
\begin{equation}\label{szoros_a_csat}
\lim_{n \to \infty}
\prob{ \forall \, T \leq n^{\nu}: \quad  \mathbf{X}_n^{[k]}(T) =
 \mathbf{Y}_n^{[k]} \left( \frac{T}{m(n)} \right) } =1.
\end{equation}
Before proving \eqref{szoros_a_csat} we first assume that it holds and deduce  Theorem \ref{theorem_elek_fejlodese} from it:

Fix $t \in (0,+\infty)$.
If $2<\nu$ and $n$ is large enough then
$2 t \cdot m(n)<n^{\nu}$.
It is easy to see that  \eqref{Y_tablazat_konv_eo}, \eqref{szoros_a_csat}
 and $\lim_{n \to \infty} \frac{2m(n)}{n^2}=\rho(W)$ together imply
\eqref{thm_edges_formula_convindist}.

\medskip

Now we start proving \eqref{szoros_a_csat}.

For $i,j \in [k]$ we define the matrix $E_{i,j} \in \A_k$ by
\[ E_{i,j}(i',j'):= \ind[i=i', j=j']+\ind[i=j', j=i'] \]

Fix $n \in \N$. We introduce the events
\begin{align*}
E_X^{\pm}(T,i,j)&:= \left\{  \mathbf{X}_n^{[k]}(T+1) = \mathbf{X}_n^{[k]}(T)
\pm E_{i,j}  \right\} \\
E_Y^{\pm}(T,i,j)&:= \left\{  Y_n^{[k]} \left( \frac{T+1}{m} \right) =
 Y_n^{[k]} \left(\frac{T}{m} \right)  \pm E_{i,j}  \right\} \\
E_X(T,\emptyset)&:= \left\{ \mathbf{X}_n^{[k]}(T+1) = \mathbf{X}_n^{[k]}(T)
  \right\} \\
E_Y(T,\emptyset)&:= \left\{  Y_n^{[k]} \left( \frac{T+1}{m} \right) =
 Y_n^{[k]} \left( \frac{T}{m} \right)  \right\}
\end{align*}
It is straightforward to derive from \eqref{MMinfty}  that there is an absolute constant $\hat{C}$ such that if we
define
\begin{equation}\label{Err_Y_defi}
\text{Err}_Y(T):=\frac{\hat{C}}{m^2}
\left(1+\sum_{i,j=1}^k Y_n \left( \frac{T}{m},i,j \right)+\mu_{i,j}\right)^2
\end{equation}
then
\begin{align}
\label{EYplus_prob}
\abs{\condprob{ E_Y^{+}(T,i,j)}{\cF_T} - \frac{\mu_{i,j}}{m} } &\leq \text{Err}_Y(T) \\
\label{EYminus_prob}
\abs{ \condprob{E_Y^{-}(T,i,j)}{\cF_T}-\frac{ Y^* \left( \frac{T}{m},i,j \right)}{m  }} &\leq \text{Err}_Y(T) \\
\label{EY_empty_prob}
\abs{ \condprob{ E_Y(T,\emptyset)}{\cF_T} - 1+
\frac{ \sum_{i\leq j \leq k}  Y^* \left( \frac{T}{m},i,j \right) + \mu_{i,j}  }{m}}  &\leq \text{Err}_Y(T)
\end{align}
From the definition of the edge reconnecting model it follows (similarly to \eqref{el_behuzas_valsege} and
\eqref{el_torles_valsege}) that there
is a constant $\tilde{C}$ depending only on $\kappa$ and $\rho$ such that if we define
\begin{multline}\label{Err_X_defi}
\text{Err}_X(T):=
 \frac{\tilde{C}}{n^3}
\left(1+\sum_{i,j=1}^k X_n(T,i,j)+ \sum_{i=1}^k \rd_n(T,i)  \right)^2
+\\
\sum_{i,j=1}^k
\frac{1}{m}\abs{\frac{ d(\mathbf{X}_n(T),i)d(\mathbf{X}_n(T),j)}{ 2m \cdot (1+\ind[i=j]) }-
\mu_{i,j}}
\end{multline}
then
\begin{align}\label{EXplus_prob}
\abs{\condprob{ E_X^{+}(T,i,j)}{\cF_T} - \frac{\mu_{i,j}}{m} } &\leq \text{Err}_X(T) \\
\label{EXminus_prob}
\abs{ \condprob{E_X^{-}(T,i,j)}{\cF_T}-\frac{ X^*(T,i,j)}{m }} &\leq \text{Err}_X(T) \\
\label{EXempty_prob}
\abs{ \condprob{ E_X(T,\emptyset)}{\cF_T} - 1+
\frac{ \sum_{i\leq j \leq k}  X^*(T,i,j) + \mu_{i,j}  }{m}}  &\leq \text{Err}_X(T)
\end{align}
For any joint realization (coupling) of the discrete time processes
$\mathbf{X}_n^{[k]}(T)$ and $\mathbf{Y}_n^{[k]} \left( \frac{T}{m} \right)$, $T=0,1,\dots$ define $E(T)$ to be the
event  that the  $\A_k$-valued increment from $T$ to $T+1$ of these two  $\A_k$-valued processes is the same:
\[ E(T):=\left\{
 \left( E_X(T,\emptyset) \cap E_Y(T,\emptyset) \right)
\cup
  \bigcup_{ \epsilon \in \{ +, - \}} \; \bigcup_{i \leq j \leq k}
  \left( E_X^{\epsilon}(T,i,j) \cap E_Y^{\epsilon}(T,i,j) \right) \right\}. \]
For any coupling the inclusion
\begin{equation}\label{coupling_inclusion}
\left\{ \mathbf{X}_n^{[k]}(T)=\mathbf{Y}_n^{[k]} \left( \frac{T}{m} \right) \right\} \cap E(T)
 \subseteq
  \left\{ \mathbf{X}_n^{[k]}(T+1)=\mathbf{Y}_n^{[k]} \left( \frac{T+1}{m} \right) \right\}
\end{equation}
holds. Let 
\begin{equation}\label{err_T}
\text{Err}(T):=2k^2(\text{Err}_X(T)+\text{Err}_Y(T)).
\end{equation}

Now if we compare  \eqref{EYplus_prob} to \eqref{EXplus_prob}, \eqref{EYminus_prob} to \eqref{EXminus_prob} and
 \eqref{EY_empty_prob} to \eqref{EXempty_prob},
 it easily follows  that there exists a coupling for which
 \begin{equation*}
 \condprob{E(T)}{\cF_T} \geq \ind [  \mathbf{X}_n^{[k]}(T)=\mathbf{Y}_n^{[k]} \left(\frac{T}{m} \right) ]\cdot
 \left( 1-\text{Err}(T) \right)
  \end{equation*}
Putting this inequality together with \eqref{coupling_inclusion}, multiplying both sides by
\[\ind[ \forall \, T' \leq T-1: \;\;  \mathbf{X}_n^{[k]}(T')=\mathbf{Y}_n^{[k]} \left(\frac{T'}{m} \right)]\] and taking
the expectation of both sides of the inequality
 we get
\begin{multline*}
\prob{ \forall \, T' \leq T+1: \;\;  \mathbf{X}_n^{[k]}(T')=\mathbf{Y}_n^{[k]} \left(\frac{T'}{m} \right)}
 \geq \\
\prob{ \forall \, T' \leq T: \;\;  \mathbf{X}_n^{[k]}(T')=\mathbf{Y}_n^{[k]} \left(\frac{T'}{m} \right) }
 -\expect{\text{Err}(T)}.
\end{multline*}
Thus in order to prove \eqref{szoros_a_csat} we only need to show
\begin{equation}\label{sum_kicsi_monor}
 \lim_{n \to \infty} \sum_{T=0}^{n^{\nu}} \expect{\text{Err}(T)} =0.
 \end{equation}

\medskip

In the remaning part of this section we prove \eqref{sum_kicsi_monor}.

First we show that if
 $T \leq n^{\nu}$ then 
\begin{equation}\label{bound_X}
\expect{ \text{Err}_X(T)}=\Ordo \left( n^{-5/2} \right).
\end{equation}
Since $\nu<\frac{5}{2}<3$, we have $n^{\nu} \leq nm$ if $n$ is large enough, thus
\begin{equation*}
 \expect{\rd_n(T,i)^2}=\Ordo(1), \qquad \expect{X_n(T,i,j)^2}=\Ordo(1)
\end{equation*}
follow from  Lemma \ref{lemma_regularity} \eqref{lemma_i_degree_bound} and
   and  Lemma \ref{lemma_regularity} \eqref{lemma_ii_edge_bound}, respectively.
\begin{multline}\label{bound_on_rate_evolution}
\expect{\frac{1}{m}\abs{\frac{ d(\mathbf{X}_n(T),i)d(\mathbf{X}_n(T),j)}{ 2m \cdot (1+\ind[i=j]) }-
\mu_{i,j}}}\stackrel{\eqref{mminfty_rate_in_Y_i_j}}{=}\\
\Ordo \left( \frac{1}{n^2} \expect{\abs{ \rd_n(T,i) \rd_n(T,j)-\rd_n(0,i)\rd_n(0,j)}} \right)=\\
\frac{1}{n^2} \Ordo \left( \expect{ \abs{ \rd_n(T,i) - \rd_n(0,i)}\cdot \rd_n(T,j)} +
\expect{ \abs{ \rd_n(T,j) - \rd_n(0,j)}\cdot \rd_n(0,i)} \right)\stackrel{(*)}{=} \\
\frac{1}{n^2} \Ordo \left( \sqrt{ \expect{ (\rd_n(T,i)-\rd_n(0,i))^2}}\sqrt{\expect{ \rd_n(T,j)^2}}+ \right. \\
\left. \sqrt{ \expect{ (\rd_n(T,j)-\rd_n(0,j))^2}}\sqrt{\expect{ \rd_n(0,i)^2}} \right)
\stackrel{\eqref{degree_doesnt_change}}{=}\\ \Ordo \left(\frac{1}{n^2}\sqrt{\frac{n^2}{n^3}}\right)\Ordo(1) =
\Ordo \left( n^{-5/2} \right)
\end{multline}
The equation marked by $(*)$ follows from the Cauchy-Schwartz inequality.

Taking the expectation of \eqref{Err_X_defi} and using \eqref{bound_X}, \eqref{bound_on_rate_evolution}
 we indeed get \eqref{bound_X}.

\medskip

Now we show that if
 $T \leq n^{\nu}$ then 
\begin{equation}\label{bound_Y}
 \expect{ \text{Err}_Y(T)}=\Ordo \left( n^{-4} \right).
\end{equation}

 The proof of $\expect{Y_n \left(\frac{T}{m},i,j \right)^2}=\Ordo(1)$ 
is similar to that of Lemma \ref{lemma_regularity} \eqref{lemma_ii_edge_bound} and we omit it,
 $\expect{ \mu_{i,j}^2} =\Ordo(1)$ follows from Lemma \ref{lemma_regularity} \eqref{lemma_i_degree_bound}.
 Taking the expectation of
 \eqref{Err_Y_defi} we get \eqref{bound_Y}.

\medskip

 Now if we substitute \eqref{bound_Y} and
\eqref{bound_X} into \eqref{err_T} 
we get $\expect{\text{Err}(T)}=\Ordo \left( n^{-5/2} \right)$ from which \eqref{sum_kicsi_monor}  follows using $\nu<\frac{5}{2}$.

\section{Proof of Theorem \ref{theorem_fokszamok_fejlodese}}\label{section_fokok_fejl}
In this section we prove Theorem \ref{theorem_fokszamok_fejlodese}
 in two stages:

In Subsection \ref{subsection_elatkotos_proof_of_degrees_degrees} we prove that
the joint evolution of the (normed, rescaled) degrees of the vertices $1,2,\dots,k$ behave like independent C.I.R. processes if $1 \ll n$. Given this result we prove (using the results of Section \ref{section_proof_of_thm_elek})
that after $n^2 \ll T$ steps the state of the edge reconnecting model is essentially edge stationary
in
Subsection \ref{subsection_elatkotos_proof_of_edgestationarity}.

\subsection{ Evolution of degrees}
\label{subsection_elatkotos_proof_of_degrees_degrees}

\begin{lemma}\label{lemma_degrees_converge_precise}
Let us fix $\kappa \in (0,+\infty)$.
We consider the edge reconnecting model $\mathbf{X}_n(T)$, $T=0,1,\dots$ on the state space $\A_n^{m(n)}$ with a vertex exchangeable initial state
$\mathbf{X}_n(0)$ for $n=1,2,\dots$ satisfying and \eqref{exponential_moment_initial_ketto}.
 We  assume $\mathbf{X}_n(0) \toinp W$ for some multigraphon $W$.

Then for all $t \in [0, +\infty)$ and $k \in \N$ we have
\begin{equation}\label{degrees_convindist_iid_CIR}
 \left( \rd_n \left( \lfloor t \cdot  \rho(W) \cdot n^3
 \rfloor,i \right) \right)_{i\in[k]} \toind \left( Z_{t,i}  \right)_{i \in [k]}  \quad \text{ as } n \to \infty
 \end{equation}
 where $\left( Z_{t,i}  \right)_{i \in [k]}$ are i.i.d. with distribution function $F_t(x)=\int_0^x f(t,y)\, \mathrm{d}y$ where
 $f(t,x)$ is defined by \eqref{CIR_densityfunction}.
\end{lemma}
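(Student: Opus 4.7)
The plan is to prove process-level weak convergence of the rescaled joint degree chain to $k$ independent Cox--Ingersoll--Ross (C.I.R.) processes on any compact time interval, and then read off the one-dimensional marginal at time $t$. Fix $k \in \N$ and view the $\R_+^k$-valued piecewise-constant process
\begin{equation*}
\mathbf{Z}^{(n)}(t) := \bigl( \rd_n(\lfloor t \cdot \rho(W) n^3 \rfloor, i) \bigr)_{i=1}^k.
\end{equation*}
By \eqref{edge_density_conv}, the time rescaling $\rho(W) n^3$ is asymptotic to $2m(n)n$, which is the natural Ethier--Kurtz scaling suggested by the heuristics \eqref{sketch_expect_D} and \eqref{sketch_var_D}. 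For the initial configuration, the hypothesis $\mathbf{X}_n(0) \toinp W$ together with the uniform-integrability consequence of \eqref{exponential_moment_initial_ketto} allows me to apply Lemma \ref{lemma_uniform_integrabiliy_b} to conclude $(\rd_n(0,i))_{i=1}^k \toinp (D(W,U_i))_{i=1}^k$, a vector of i.i.d.\ random variables with common distribution function $F_0$.

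For the dynamics, I would use a generator/martingale-problem convergence argument. For every $C^2$ test function $f : \R_+^k \to \R$ with compact support, a second-order Taylor expansion of the one-step increment combined with \eqref{sketch_expect_D} and \eqref{sketch_var_D} gives
\begin{equation*}
2m(n) n \cdot \condexpect{ f(\mathbf{Z}^{(n)}(T+1)) - f(\mathbf{Z}^{(n)}(T)) }{\cF_T} \longrightarrow \sum_{i=1}^k \bigl( \kappa - \tfrac{\kappa}{\rho(W)} z_i \bigr) \partial_i f(\mathbf{z}) + \sum_{i=1}^k z_i\, \partial_i^2 f(\mathbf{z})
\end{equation*}
evaluated at $\mathbf{z} = \mathbf{Z}^{(n)}(T)$, which is precisely the generator of $k$ independent C.I.R.\ processes with parameters $(\kappa, \rho(W))$ of the form \eqref{CIR_SDE_sketch}. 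Crucially, the off-diagonal second-derivative contribution vanishes: for $i \neq j$ the conditional cross-moment of the one-step increments of $\rd_n(\cdot, i)$ and $\rd_n(\cdot, j)$ equals
$-\tfrac{1}{n^2}\bigl[\prob{\cV_{new}=i}\prob{\cV_{old}=j} + \prob{\cV_{old}=i}\prob{\cV_{new}=j}\bigr] = O(n^{-4})$,
since the relevant hitting probabilities are each $O(1/n)$ and the two terms with $\cV_{old}=\cV_{new}$ on the same coordinate vanish; multiplied by $2m(n)n = O(n^3)$ this gives $O(1/n)$, so the coordinates decouple in the limit.

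Tightness of the laws of $\mathbf{Z}^{(n)}$ in Skorokhod space should follow from Aldous's criterion, with the exponential tail bound of Lemma \ref{lemma_regularity}\eqref{lemma_i_degree_bound} furnishing uniform boundedness in probability and the $L^2$-increment estimate \eqref{degree_doesnt_change} of Lemma \ref{lemma_regularity}\eqref{lemma_iii_degree_change} furnishing the modulus-of-continuity equicontinuity. Any subsequential weak limit then solves the martingale problem for $k$ independent C.I.R.\ diffusions with initial distribution $F_0^{\otimes k}$, whose weak uniqueness can be read off from the explicit transition density \eqref{atmenetsurusegfuggveny_CIR}. Specializing to the one-dimensional marginal at time $t$ and recalling \eqref{CIR_densityfunction} yields \eqref{degrees_convindist_iid_CIR}.

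The hard part will be making the asymptotic independence uniform: although the per-step cross-covariance estimate above is $O(n^{-4})$ pointwise, one must control it uniformly in $T \leq \lfloor t \cdot \rho(W) n^3 \rfloor$ while integrating against test functions whose arguments $\mathbf{Z}^{(n)}(T)$ may in principle grow; Lemma \ref{lemma_regularity}\eqref{lemma_i_degree_bound} is essential here to provide the required uniform integrability via exponential tails. A secondary technical nuisance is the non-Lipschitz diffusion coefficient $\sqrt{2z}$ of the C.I.R.\ SDE at $z = 0$, which precludes the use of Lipschitz SDE theory; I would bypass this by invoking weak uniqueness through the explicit density \eqref{atmenetsurusegfuggveny_CIR}, as cited from Chapter 4.6 of \cite{penzugyimatek}.
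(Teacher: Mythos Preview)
Your proposal is correct and follows essentially the same strategy as the paper: verify convergence of the initial law via Lemma~\ref{lemma_uniform_integrabiliy_b}, compute the drift and diagonal diffusion asymptotics from \eqref{sketch_expect_D}--\eqref{sketch_var_D}, show the off-diagonal cross-covariances are $O(n^{-4})$ so that the $k$ coordinates decouple, and invoke weak uniqueness of the C.I.R.\ SDE from the explicit transition density \eqref{atmenetsurusegfuggveny_CIR}.

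The only substantive difference is packaging. The paper does not carry out the Ethier--Kurtz martingale-problem argument and Aldous tightness directly; instead it quotes a ready-made diffusion-approximation result (Corollary~2.2 of \cite{ispany}, stated in the paper as Theorem~\ref{ispany_theorem}) whose hypotheses are integrated drift/diffusion conditions \eqref{drift}--\eqref{diffusion} plus a conditional Lindeberg condition \eqref{tightness}. The paper then verifies these three conditions by exactly the computations you outline, with one simplification you missed: since $|d\rd_n(T,i)|\le 1/n$ deterministically, the Lindeberg condition \eqref{tightness} is trivial, so no separate tightness argument via Aldous and Lemma~\ref{lemma_regularity}\eqref{lemma_iii_degree_change} is needed. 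Your route would work, but the bounded-increment observation is cheaper.
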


In order to prove this lemma, we are going to apply a special case of \cite[Corollary 2.2]{ispany}, which we
reformulate to fit our needs and notation:

\begin{theorem}\label{ispany_theorem}
Let $\beta: \R \to \R$ and $\gamma: \R \to \R$ be continuous
functions. Assume that the stochastic differential equation \be
\label{general_SDE} \mathrm{d}Z_t=\beta(Z_t)\mathrm{d}t +\gamma(Z_t)\, \mathrm{d} B_t
 \ee
  has a unique weak solution with $Z_0=z_0$ for all
 $z_0 \in \R$. Let $F_0(x)$ be a probability distribution function on $\R$.

 Fix $k \in \N$.
 For each
 $n\in \N$  let $\left(\rd_n(T,i)\right)_{i \in [k], T \in \N}$ be a discrete time $\R^k$-valued stochastic process
  adapted to the filtration
 $\left(\cF_{n,T}\right)_{T \in \N}$. Let
\[d \rd_n(T,i):= \rd_n(T+1,i)-\rd_n(T,i).\]

   Suppose
 \begin{equation}\label{initial_dist_conv_okt23}
 \left( \rd_n(0,i) \right)_{i=1}^k \toind  \left(Z_{0,i}\right)_{i=1}^k \quad \text{ as } \quad n \to \infty
 \end{equation}
   where
 $\left(Z_{0,i}\right)_{i=1}^k$ are i.i.d. with distribution
 function $F_0$. Let $m: \N \to \N$ and
  suppose that for each $\mybart \in [0,+\infty)$ and each $1\leq i,j \leq
 k$ we have
 \begin{equation} \label{drift}
 \sup_{t \in \lbrack 0, \mybart \rbrack} \left|
 \sum_{T=0}^{\lfloor 2\cdot m(n) \cdot n\cdot t\rfloor} \condexpect{
 d \rd_n(T,i)}{\cF_{n,T}} - \frac{1}{2 m(n) \cdot n} \cdot  \sum_{T=0}^{\lfloor 2m(n) \cdot n \cdot t \rfloor}
 \beta( \rd_n(T,i) ) \right| \toinp 0
 \end{equation}
\begin{multline} \label{diffusion}
 \sup_{t \in \lbrack 0, \mybart \rbrack} \left|
 \sum_{T=0}^{\lfloor 2m(n)\cdot n \cdot t\rfloor} \condcov{
 d \rd_n(T,i) }{d \rd_n(T,j) }{\cF_{n,T}} - \right. \\
 \left.
 \frac{1}{2 m(n) \cdot n} \cdot
 \sum_{T=0}^{\lfloor 2m(n)\cdot n \cdot t \rfloor}
\ind [i=j]\cdot \gamma^2(\rd_n(t,i)) \right| \toinp 0
\end{multline}
 \begin{equation}
\label{tightness}
 \sum_{T=0}^{\lfloor 2m(n) \cdot n \cdot \mybart \rfloor}
\condexpect{( d \rd_n(T,i) )^2 \ind [
\abs{d \rd_n(T,i) }>\varepsilon ] }{\cF_{n,T}} \toinp 0 \text{
for all } \varepsilon>0
 \end{equation}
as $n \to \infty$.

Then the distributions of the $\R^k$-valued continuous-time stochastic processes
\[\left(  \rd_n \left( \lfloor 2m(n) \cdot n \cdot t \rfloor,i \right)   \right)_{i \in [k], t \geq 0}\]
 converge weakly to the
distribution of $\left( Z_{t,i} \right)_{i \in [k], t \geq 0}$ as $n \to
\infty$ in the Skorohod space $\D(\R^k)$,
 where $\left( Z_{t,i} \right)_{i \in [k], t \geq 0}$ are i.i.d. solutions of
\eqref{general_SDE},
 or briefly:
\begin{equation}\label{skorohod convergence}
\left(  \rd_n \left(\lfloor 2m(n) \cdot n \cdot t \rfloor,i \right)   \right)_{i \in [k], t \geq 0} \stackrel{ \mathcal{L}}{\longrightarrow} \left( Z_{t,i} \right)_{i \in [k], t \geq 0}
\end{equation}

\end{theorem}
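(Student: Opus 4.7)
The plan is to treat this as a functional central limit theorem for semimartingales in the Ethier-Kurtz / Jacod-Shiryaev framework. First I embed each discrete sequence into a càdlàg process by $\tilde Z_n(t,i) := \rd_n(\lfloor 2 m(n) \cdot n \cdot t\rfloor, i)$, so the claim becomes convergence of the $\R^k$-valued process $\tilde{\mathbf{Z}}_n := (\tilde Z_n(\cdot, i))_{i=1}^k$ to i.i.d. solutions of \eqref{general_SDE} in $\D(\R^k)$. I then apply the Doob decomposition coordinatewise,
\[
\tilde Z_n(t,i) = \rd_n(0,i) + A_n(t,i) + M_n(t,i),
\]
where $A_n(t,i) := \sum_{T=0}^{\lfloor 2 m(n) n t\rfloor - 1} \condexpect{d\rd_n(T,i)}{\cF_{n,T}}$ is the predictable compensator and $M_n(\cdot,i)$ is a mean-zero martingale whose increments are $d\rd_n(T,i) - \condexpect{d\rd_n(T,i)}{\cF_{n,T}}$.

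The heart of the argument is limit identification. Condition \eqref{drift} says (after reading the second term as a Riemann sum) that $A_n(t,i)$ is uniformly close in probability to $\int_0^t \beta(\tilde Z_n(s,i))\, \mathrm{d}s$, and by continuous mapping the limit of this integral functional is $\int_0^t \beta(Z_{s,i})\, \mathrm{d}s$. Condition \eqref{diffusion} gives the quadratic covariation asymptotics
\[
\la M_n(\cdot, i), M_n(\cdot, j)\ra_t \approx \ind[i=j] \cdot \int_0^t \gamma^2(\tilde Z_n(s,i))\, \mathrm{d}s,
\]
the vanishing of off-diagonal entries being what forces the $k$ limiting components to be driven by independent Brownian motions. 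Condition \eqref{tightness} is a Lindeberg-type bound that makes the martingale part asymptotically continuous, ruling out compensating large jumps. Together these three conditions let me invoke a standard martingale-problem convergence theorem (Ethier-Kurtz, Theorem 7.4.1/8.2.1) to conclude that any subsequential limit $\mathbf{Z}$ of $\tilde{\mathbf{Z}}_n$ solves the martingale problem associated with the diagonal system $\mathrm{d} Z_{t,i} = \beta(Z_{t,i})\, \mathrm{d}t + \gamma(Z_{t,i})\, \mathrm{d}B_{t,i}$ with mutually independent Brownian motions $(B_{\cdot,i})_{i=1}^k$; the assumed weak uniqueness of \eqref{general_SDE} upgrades this to uniqueness in law for the coupled system and pins down the limit as i.i.d. copies of the solution.

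The main technical obstacle is tightness of $\{\tilde{\mathbf{Z}}_n\}_{n \in \N}$ in $\D(\R^k)$, which is not automatic since $\beta, \gamma$ are only assumed continuous, not bounded, and the paths are not a priori confined to a compact set. I would localize via the stopping times $\tau_R := \inf\{t \ge 0 : \|\tilde{\mathbf{Z}}_n(t)\| \ge R\}$: condition \eqref{initial_dist_conv_okt23} gives tightness of initial values, and on the stopped process the coefficients are bounded, so Aldous's criterion can be verified by bounding the predictable increments via \eqref{drift} and the quadratic-variation increments via \eqref{diffusion} on short subintervals, combined with \eqref{tightness} to control large jumps. Letting $R \to \infty$ then transfers tightness to $\tilde{\mathbf{Z}}_n$, after which Prokhorov extraction plus the martingale-problem identification above upgrades subsequential convergence to full convergence. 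As a cleaner alternative, since the statement is advertised as a special case of Corollary 2.2 of \cite{ispany}, one may simply verify that the hypotheses \eqref{initial_dist_conv_okt23}--\eqref{tightness} match their conditions term-by-term under the rescaling $t \mapsto \lfloor 2 m(n) n t\rfloor$ and invoke their theorem directly.
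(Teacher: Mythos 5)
The paper itself offers no proof of Theorem~\ref{ispany_theorem}: it is stated explicitly as a reformulation of Corollary~2.2 of Isp\'any--Pap~\cite{ispany}, specialized to the time scaling $T \mapsto \lfloor 2 m(n) n t\rfloor$ and to the diagonal, decoupled diffusion coefficient $\ind[i=j]\gamma^2$. Your ``cleaner alternative'' at the end --- matching the hypotheses \eqref{initial_dist_conv_okt23}--\eqref{tightness} to the conditions of~\cite{ispany} term by term and citing their result --- is exactly what the paper does, and it is the only proof the paper provides.

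Your main argument is a from-scratch reconstruction of the machinery that sits behind such statements (semimartingale CLT / martingale-problem convergence in the Ethier--Kurtz or Jacod--Shiryaev style), and it is structurally sound: Doob decomposition of the time-changed step process, identification of the compensator and predictable quadratic covariation via \eqref{drift} and \eqref{diffusion} read as Riemann sums, Lindeberg control of jumps via \eqref{tightness}, and martingale-problem uniqueness inherited from weak uniqueness of \eqref{general_SDE} applied componentwise (the system decouples because the off-diagonal covariations vanish). The one step you should be a bit more careful with is the remark that ``by continuous mapping the limit of this integral functional is $\int_0^t \beta(Z_{s,i})\,\mathrm{d}s$'': the map $z \mapsto \int_0^\cdot \beta(z_s)\,\mathrm{d}s$ on $\D$ is continuous only at continuous paths and only after localizing, since $\beta$ and $\gamma$ are merely continuous and unbounded; you do address this in the tightness paragraph via the stopping times $\tau_R$, but the continuity-of-the-integral-map step and the localization need to be executed jointly rather than sequentially. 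What the citation route buys you is precisely not having to re-verify these standard but fiddly semimartingale-convergence details; what your longer route buys you is transparency about which hypothesis is doing what work (drift identification, diffusion identification, Lindeberg, initial-law convergence) --- which is valuable, but goes beyond what the paper contains.
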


$ $

\begin{proof}[Proof of Lemma \ref{lemma_degrees_converge_precise}]

$ $

We are going to use Theorem \ref{ispany_theorem} to prove that for all $k$ we have
\eqref{skorohod convergence}
 where $\left( Z_{t,i} \right)_{i \in [k], t \geq 0}$ are i.i.d. solutions of
\eqref{CIR_SDE_sketch} with initial distribution functions  $\prob{Z_{0,i} \leq x}=F_0(x)$, where $F_0(x)$ is defined as in 
Theorem \ref{theorem_fokszamok_fejlodese}. From this the claim of Lemma \ref{lemma_degrees_converge_precise} indeed follows,
 since by \eqref{edge_density_conv} we have $\lim_{n \to \infty} \frac{2m(n)}{n^2}=\rho(W)$, thus
 \[\left(  \rd_n \left(\lfloor 2m(n) \cdot n \cdot t \rfloor,i \right)   \right)_{i \in [k], t \geq 0}-
 \left(  \rd_n \left( \lfloor t \cdot  \rho(W) \cdot n^3
 \rfloor,i \right)   \right)_{i \in [k], t \geq 0} \stackrel{ \mathcal{L}}{\longrightarrow} \left( \, 0 \, \right)_{i \in [k], t \geq 0}, \]
  from which it follows that for each $t\geq 0$ the relation \eqref{degrees_convindist_iid_CIR} holds, where
 $\left( Z_{t,i} \right)_{i \in [k], t \geq 0}$ are i.i.d. solutions of
\eqref{CIR_SDE_sketch}, and using \eqref{atmenetsurusegfuggveny_CIR} we get that $\left( Z_{t,i}  \right)_{i \in [k]}$ are i.i.d. 
with distribution function $F_t(x)=\int_0^x f(t,y)\, \mathrm{d}y$ where
 $f(t,x)$ is defined by \eqref{CIR_densityfunction}.

\medskip

We need to check that \eqref{initial_dist_conv_okt23}, \eqref{drift}, \eqref{diffusion} and \eqref{tightness} holds
with \[\beta(z)=\kappa-\frac{\kappa}{\rho} z, \qquad \gamma(z)=\sqrt{2z}.\]

From the assumptions $\mathbf{X}_n(0) \toinp W$, \eqref{exponential_moment_initial_ketto} and Lemma
 \ref{lemma_uniform_integrabiliy_b}  it follows that
\[\left( \rd_n(0,i ) \right)_{i \in [k]} \toind
\left( D(W,U_i)\right)_{i \in [k]},\]
 thus by \eqref{degree_W_expect} and the definition of $F_0$ in Theorem \ref{theorem_fokszamok_fejlodese} we get that the 
probability distribution function of
 $D(W,U_i)$ is $F_0$ and 
 \eqref{initial_dist_conv_okt23} holds.

\medskip

Now we check that \eqref{drift} holds:
\begin{multline} \label{drift_int_calculation}
\sup_{t \in \lbrack 0, \mybart \rbrack} \left|
 \sum_{T=0}^{\lfloor 2\cdot m(n) \cdot n\cdot t\rfloor} \condexpect{
 d \rd_n(T,i)}{\cF_{n,T}} - \frac{1}{2 m(n) \cdot n} \cdot  \sum_{T=0}^{\lfloor 2m(n) \cdot n \cdot t \rfloor}
\left( \kappa-\frac{\kappa}{\rho} \rd_n(T,i) \right) \right|
\\
 \stackrel{\eqref{Vold}, \eqref{Vnew}, \eqref{degree_ind_evolution}}{\leq}
 \sum_{T=0}^{\lfloor 2m(n) \cdot n \cdot \mybart \rfloor}
 \left| \left(\frac{\rd_n(T,i)+\frac{\kappa}{n}}{2m(n)+n\kappa}
 -\frac{\rd_n(T,i)}{2m(n)} \right)-\frac{1}{2m(n)\cdot n} \left(\kappa
 -\frac{\kappa}{\rho} \rd_n(T,i) \right) \right|=\\
\sum_{T=0}^{\lfloor 2m(n) \cdot n \cdot \mybart \rfloor}
\frac{1}{2m(n)\cdot n}
\left( \left( \Ordo \left(\frac1n \right) +\left( \frac{\kappa}{\rho} -
\frac{\kappa}{ \frac{2 m(n)}{n^2}} \right) \right) \rd_n(T,i)   + \Ordo \left( \frac{n}{m(n)} \right) \right)
\end{multline}

By Lemma \ref{lemma_regularity} \eqref{lemma_i_degree_bound}  we have $\expect{\rd_n(T,i)}=\Ordo(1)$, thus
$\expect{\eqref{drift_int_calculation}} \to 0$ as $n \to \infty$
which implies \eqref{drift}.

\medskip

We prove \eqref{diffusion} by treating the cases $i=j$ and $i \neq j$ separately.

First we prove \eqref{diffusion} when $i=j$.  Using
 \eqref{Vold}, \eqref{Vnew}, \eqref{degree_ind_evolution}
 and the fact that $\cV_{new}(T)$
and $\cV_{old}(T)$ are conditionally independent given $\cF_{n,T}$ we get
 \begin{multline} \label{diffusion_int_calculation}
 \sup_{t \in \lbrack 0, \mybart \rbrack} \left|
 \sum_{T=0}^{\lfloor 2m(n)\cdot n \cdot t\rfloor} \condvar{
 d \rd_n(T,i) }{\cF_{n,T}} -
 \frac{1}{2 m(n) \cdot n} \cdot
 \sum_{T=0}^{\lfloor 2m(n)\cdot n \cdot t \rfloor}
 2 \rd_n(T,i) \right|
 \leq
 \\
\sum_{T=0}^{\lfloor 2m(n) \cdot n\mybart\rfloor} \left|
 \frac{1}{n^2}\left(
 \frac{n \rd_n(T,i)}{2m}\left(1-\frac{n \rd_n(T,i)}{2m}\right)+\right. \right. \\
\left. \left.
\frac{n \rd_n(T,i)+\kappa}{2m+n\kappa}\left(1-\frac{n \rd_n(T,i)+\kappa}{2m+n\kappa}\right)\right)
- \frac{1}{2m(n)\cdot n} 2\rd_n(T,i)  \right|=\\
\sum_{T=0}^{\lfloor 2m(n) \cdot n\mybart\rfloor}
\frac{1}{2 m(n) \cdot n} \left(
\Ordo \left( \frac{n}{m(n)} \right) \rd_n(T,i)^2 +\Ordo \left( \frac{1}{n} \right) +
\Ordo \left( \frac{n}{m(n)} \right) \rd_n(T,i)
\right)
 \end{multline}
 By Lemma \ref{lemma_regularity} \eqref{lemma_i_degree_bound} we have 
 $\expect{\rd_n(T,i)^2}=\Ordo(1)$, thus
$\expect{\eqref{diffusion_int_calculation}} \to 0$ as $n \to \infty$ which implies
\eqref{diffusion} for $i=j$.

Now we prove \eqref{diffusion} when $i \neq j$:
\begin{multline}\label{cov_int_calculation}
 \sup_{t \in \lbrack 0, \mybart
\rbrack} \left|
 \sum_{T=0}^{\lfloor 2m(n)\cdot  n \cdot t\rfloor} \condcov{
 d \rd_n(T,i) }{ d \rd_n(T,j) }{\cF_{n,T}} \right|
 \leq\\
 \sum_{T=0}^{\lfloor 2m(n)\cdot n \cdot \mybart\rfloor}
\left| - \left(\frac{\rd_n(T,i)}{2m(n)} \cdot \frac{\rd_n(T,j)+\frac{\kappa}{n}}{2m(n)+n\kappa}
+ \frac{\rd_n(T,j)}{2m(n)} \cdot \frac{\rd_n(T,i)+\frac{\kappa}{n}}{2m(n)+n\kappa}\right)
 -\right. \\
\left.
 \left(\frac{\rd_n(T,i)+\frac{\kappa}{n}}{2m(n)+n\kappa}
 -\frac{\rd_n(T,i)}{2m(n)} \right) \cdot \left(\frac{\rd_n(T,j)+\frac{\kappa}{n}}{2m(n)+n\kappa}
 -\frac{\rd_n(T,j)}{2m(n)} \right)
   \right|=\\
\sum_{T=0}^{\lfloor 2m(n)\cdot n \cdot \mybart\rfloor}
\frac{1}{2 m(n) \cdot n} \left( \Ordo \left( \frac{n}{m(n)} \right) \left( \rd_n(T,i)+\rd_n(T,j) \right)^2 \right)
\end{multline}
By  Lemma \ref{lemma_regularity} \eqref{lemma_i_degree_bound} we have $\expect{\rd_n(T,i)^2}=\Ordo(1)$ and
 $\expect{\rd_n(T,j)^2}=\Ordo(1)$, which implies
$\expect{\eqref{cov_int_calculation}} \to 0$ as $n \to \infty$ which in turn implies
\eqref{diffusion} for $i\neq j$.
\eqref{tightness} is trivial since
$\prob{\abs{d \rd_n(T,i) }\leq \frac{1}{n}}=1$.

Having checked that \eqref{initial_dist_conv_okt23}, \eqref{drift}, \eqref{diffusion} and \eqref{tightness} holds,
  we can use Theorem \ref{ispany_theorem} to prove that we have
\eqref{skorohod convergence}
 where $\left( Z_{t,i} \right)_{i \in [k], t \geq 0}$ are i.i.d. solutions of
\eqref{CIR_SDE_sketch} with initial distribution functions  $F_0(x)$, which finishes the proof of Lemma
\ref{lemma_degrees_converge_precise}, as described in the beginning of the proof.
\end{proof}

\subsection{Asymptotic edge-stationarity}
\label{subsection_elatkotos_proof_of_edgestationarity}

Similarly to Section \ref{section_proof_of_thm_elek} we assume that the distribution of $\mathbf{X}_n(T)$ is vertex exchangeable.
We are going to prove \eqref{thm_fokok_statement_conv_graphlim} using Lemma \ref{lemma_homkonv_konv_in_dist}: 
we only need to show that for all $k \in \N$ and
 $t > 0$ we have
 \begin{equation}\label{thm_degrees_formula_convindist}
  \mathbf{X}_n^{[k]} \left( \lfloor t \cdot  \rho(W) \cdot n^3
 \rfloor \right)  \toind \mathbf{X}_{\hat{W}_t}^{[k]}.
 \end{equation}

\begin{proof}[Proof of Theorem \ref{theorem_fokszamok_fejlodese}]
 Let $\left( Z_{t,i}  \right)_{i \in [k]}$ denote i.i.d. random variables with 
distribution function $F_t(x)=\int_0^x f(t,y)\, \mathrm{d}y$ where
 $f(t,x)$ is defined by \eqref{CIR_densityfunction}. Recall the notion of $\mypoi(k, \lambda)$ from \eqref{def_mypoi}.
 Define the function $\mypoi(A, \left( z_{i}\right)_{i=1}^k)$ for $A \in \A_k$ and $z_i \in [0,+\infty)$, $i \in [k]$ by
\begin{equation}\label{poi_graphlim_W_formula_okt25}
\mypoi(A,\left(z_i\right)_{i=1}^k):=
\prod_{i=1}^k \prod_{j=i}^k
\mypoi \left( A^*(i,j), \frac{ z_{i} \cdot z_{j} }{ \rho \cdot (1+\ind[i=j]) } \right).
\end{equation}
By \eqref{X_W_indep_prod_formula} and \eqref{fokszamok_fejlodese_grafon}, in order to prove 
\eqref{thm_degrees_formula_convindist} we only need to check that
for all $A \in \A_k$ 
\begin{equation*}
\lim_{n \to \infty} \prob{ \mathbf{X}_n^{[k]} \left( \lfloor t \cdot  \rho(W) \cdot n^3
 \rfloor \right)= A}= \expect{ \mypoi(A, \left( Z_{t,i}\right)_{i=1}^k)}.
\end{equation*}

We (somewhat arbitrarily) fix  $2< \nu <\frac{5}{2}$. Let 
\[T_0^n:=\lfloor t \cdot  \rho(W) \cdot n^3 \rfloor - \lfloor n^{\nu}
 \rfloor.\]
It easily follows from $\nu <\frac{5}{2}< 3$,  Lemma
\ref{lemma_regularity} \eqref{lemma_iii_degree_change} and Lemma \ref{lemma_degrees_converge_precise}  that
\begin{equation}\label{degrees_convindist_iid_CIR_korabb}
 \left( \rd_n( T_0^n,i ) \right)_{i\in[k]} \toind \left( Z_{t,i}  \right)_{i \in [k]}  \quad \text{ as } \quad n \to \infty.
 \end{equation}

Now we couple
$\mathbf{X}_n^{[k]}(T_0^n+T)$
to $\mathbf{Y}_n^{[k]}\left(\frac{T}{m(n)}\right)$ in a similar fashion as in Section \ref{section_proof_of_thm_elek}:
\begin{itemize}
 \item The initial state of $\mathbf{Y}_n^{[k]}$ is $\forall \, i,j \in [k]:\; Y_n(0,i,j)=X_n(T_0^n,i,j)$.
\item Given $\mathbf{X}_n(T_0^n)$, 
 the entries $\left(Y_n(t,i,j)\right)_{i\leq j \leq k}$ evolve independently and
 $Y_n(t,i,j)\equiv Y_n(t,j,i)$.
\item Given $\mathbf{X}_n(T_0^n)$, the evolution
 of $Y^*_n(t,i,j)$ is is an M/M/$\infty$-queue
  with service rate $1$ and arrival rate
\begin{equation}\label{MMinfty_rate_T_null}
 \mu=\mu_{i,j}:=\frac{ d(\mathbf{X}_n(T_0^n),i)d(\mathbf{X}_n(T_0^n),j)}{2m(n)\cdot (1+\ind[i=j]) }=
 \frac{ \rd_n(T_0^n,i) \rd_n(T_0^n,j)}{ \frac{2m(n)}{n^2} \cdot (1+\ind[i=j]) }.
\end{equation}
\end{itemize}

Now we show that
 \begin{equation}\label{Y_tablazat_konv_eo_stac}
  \mathbf{Y}_n^{[k]} \left( \frac{ \lfloor n^{\nu} \rfloor}{m(n)} \right)  
\toind \mathbf{X}_{\hat{W}_t}^{[k]} \quad \text{ as } \quad n \to \infty.
 \end{equation}
First note that
\begin{multline}\label{duplalim_formula}
\lim_{n \to \infty} \lim_{s \to \infty} \prob{ \mathbf{Y}_n^{[k]}( s )=A}\stackrel{\eqref{queue_t_0_infty}}{=}
\lim_{n \to \infty} \expect{ \prod_{i=1}^k \prod_{j=i}^k
\mypoi \left( A^*(i,j), \frac{ \rd_n(T_0^n,i) \rd_n(T_0^n,j)}{ \frac{2m(n)}{n^2} \cdot (1+\ind[i=j]) } \right)}
\stackrel{\eqref{degrees_convindist_iid_CIR_korabb} }{=}\\
\expect{ \prod_{i=1}^k \prod_{j=i}^k
\mypoi \left( A^*(i,j), \frac{ Z_{t,i} \cdot Z_{t,j} }{ \rho \cdot (1+\ind[i=j]) } \right)}
\stackrel{\eqref{poi_graphlim_W_formula_okt25}}{=}
\expect{ \mypoi(A, \left( Z_{t,i}\right)_{i=1}^k)}
\end{multline}
Let $t_n:=\frac{ \lfloor n^{\nu} \rfloor}{m(n)}$. $\lim_{n \to \infty} t_n=+\infty$ follows from  $2<\nu$. We have
\begin{multline}\label{annealed_conv_to_stac}
\abs{ \prob{ \mathbf{Y}_n^{[k]}( t_n )=A} -
\lim_{s \to \infty} \prob{ \mathbf{Y}_n^{[k]}( s )=A}} \stackrel{\eqref{lemma_kicsi_eq}}{\leq} \\
\exp(-t_n) \cdot
\sum_{i=1}^k \sum_{j=i}^k
\left(
\expect{ X_n(T_0^n,i,j)}+
\expect{\mu_{i,j} }
\right)
\stackrel{\eqref{MMinfty_rate_T_null}, \eqref{exp_lecseng_a_foxam}, \eqref{moment_p_bound}}{=}
\exp(-t_n)\Ordo(1).
\end{multline}
Thus \eqref{Y_tablazat_konv_eo_stac} follows from \eqref{duplalim_formula} and \eqref{annealed_conv_to_stac}.

Using the proof of \eqref{szoros_a_csat} we can  construct a coupling such that we have
\begin{equation*}
\lim_{n \to \infty}
\prob{ \forall \, 0 \leq T \leq n^{\nu}: \quad  \mathbf{X}_n^{[k]}(T_0^n+T) =
 \mathbf{Y}_n^{[k]} \left( \frac{T}{m(n)} \right) } =1.
\end{equation*}
Now \eqref{thm_degrees_formula_convindist} follows from this, $T_0^n+ \lfloor n^{\nu} \rfloor=
\lfloor t \cdot  \rho(W) \cdot n^3 \rfloor$ and \eqref{Y_tablazat_konv_eo_stac}.
\end{proof}

\end{document}